\documentclass[pdflatex,sn-mathphys-num]{sn-jnl}

\usepackage{multirow}%
\usepackage{amsmath,amssymb,amsfonts}%
\usepackage{amsthm}%
\usepackage{mathrsfs}%
\usepackage[title]{appendix}%
\usepackage{xcolor}%
\usepackage{textcomp}%
\usepackage{manyfoot}%
\usepackage{booktabs}%
\usepackage{algorithm}%
\usepackage{algorithmicx}%
\usepackage{algpseudocode}%
\usepackage{listings}
\usepackage{tikz}
\usepackage{graphicx} 
\usepackage{caption}
\usepackage{natbib} %

\theoremstyle{thmstyleone}%
\newtheorem{theorem}{Theorem}[section]
\newtheorem{remark}{Remark}%
\newtheorem{lemma}[theorem]{Lemma}
\newtheorem{corollary}[theorem]{Corollary}
\begin{document}

\title[Sharp Pre-Schwarzian Norm Bounds for Ma-Minda Starlike Classes]{Sharp Pre-Schwarzian Norm Bounds for Ma-Minda Starlike Classes}

\author*[1]{\fnm{Ming} \sur{Li}}\email{mingli@csust.edu.cn}

\author[1]{\fnm{Mei} \sur{Luo}}\email{rosemeil@163.com}
\equalcont{These authors contributed equally to this work.}

\affil*[1]{\orgdiv{School of Mathematics and Statistics}, \orgname{Changsha University of Science and Technology}, \orgaddress{\street{960, 2nd, Wanjiali RD(S)}, \city{Changsha}, \postcode{410114}, \state{Hunan}, \country{China}}}

\abstract{In this paper, we develop a unified framework to evaluate the pre-Schwarzian norm for the Ma–Minda starlike class. We present a direct, general computational approach. As an application, we streamline and consolidate the results from Ali and Pal (Monatsh. Math., 2023), who obtained sharp estimates for the pre-Schwarzian norm of the Janowski starlike class. Furthermore, we utilize the proposed framework to derive explicit norm formulas for both classical and newly introduced subclasses of starlike functions.}

\keywords{Pre-Schwarzian norm, 
	Ma-Minda starlike class,  Dieudonn\'e's lemma,
	Janowski starlike class,
	}

\pacs[MSC Classification]{30C45, 30C55}

\maketitle

\section{Introduction}\label{sec1}

Let $\mathscr{H}$ denote the class of all analytic functions in the open unit disk $\mathbb{D} := \{z \in \mathbb{C} : |z| < 1\}$. The Schwarz class $\mathscr{W}$ consists of all $\omega \in \mathscr{H}$ satisfying $\omega(0) = 0$ and $|\omega(z)| < 1$ for $z \in \mathbb{D}$. Let $\mathscr{A}$ denote the subclass of $\mathscr{H}$ comprising functions normalized by $f(0) = 0$ and $f'(0) = 1$, and let $\mathscr{S}$ be the subclass of $\mathscr{A}$ of univalent functions in $\mathbb{D}$. A function $f \in \mathscr{A}$ is called \textit{starlike} if $f(\mathbb{D})$ is starlike with respect to the origin, i.e., $[0, w_0] \subset f(\mathbb{D})$ for all $w_0 \in f(\mathbb{D})$. We denote by $\mathscr{S}^*$ the class of all starlike functions. It is well-known that a function \(f \in \mathscr{S}^*\) if and only if \(\operatorname{Re}\left(zf'(z)/f(z)\right) > 0\) for \(z \in \mathbb{D} \). A function $f$ belongs to the class of strongly starlike functions of order $\alpha$ (denoted by  $\mathscr{S}\mathscr{S}(\alpha)$) if
\begin{align*}
	\left|\arg\left(\frac{z f'(z)}{f(z)}\right)\right| < \frac{\pi}{2}\alpha, \quad z \in \mathbb{D},
\end{align*}
where $0 < \alpha \leq 1$. In addition, for $0 \leq \beta < 1$, let $\mathscr{S}\mathscr{S}^*(\alpha, \beta)$ denote the generalized subclass of strongly starlike functions satisfying
\begin{align*}
	\left| \arg \left( z \frac{f'(z)}{f(z)} - \beta \right) \right| < \frac{\pi \alpha}{2}, \quad z \in \mathbb{D}.
\end{align*}

Subordination is a fundamental tool for constructing generalized function classes. An analytic function $f \colon \mathbb{D} \to \mathbb{C}$ is said to be subordinate to $g \in \mathscr{H}$, denoted by $f \prec g$, if there exists a function $\omega \in \mathscr{W}$ such that $f = g \circ \omega$.

Let $\mathcal{M}$ denote the class of analytic univalent functions $\varphi$ in $\mathbb{D}$ with positive real part, satisfying $\varphi(0) = 1$, $\varphi'(0) > 0$, and whose image $\varphi(\mathbb{D})$ is symmetric with respect to the real axis and starlike with respect to $1$. Based on the concept of subordination, Ma and Minda \cite{1992Ma} unified the starlike class as
\[
\mathscr{S}^*(\varphi) = \left\{ f \in \mathscr{A} : \frac{zf'(z)}{f(z)} \prec \varphi(z) \right\},
\]
where $\varphi \in \mathcal{M}$. The class $\mathscr{S}^*(\varphi)$ is known as the Ma-Minda class of starlike functions. Obviously, \( \mathscr{S}^*((1+z/1-z))=\mathscr{S}^* \), \( \mathscr{S}^*(((1 + z)/(1 - z))^\alpha)=\mathscr{S}\mathscr{S}^*(\alpha) \) and \( \mathscr{S}^*(((1 + (1 - 2\beta)z)/(1 - z))^\alpha)=\mathscr{S}\mathscr{S}^*(\alpha,\beta) \).
Taking $\varphi(z) = (1 + (1 - 2\alpha)z)/(1 - z)$ with $0 \leq \alpha < 1$ yields the class of starlike functions of order $\alpha$, denoted by $\mathscr{S}^*(\alpha)$. For parameters $-1 \leq B < A \leq 1$, the selection $\varphi(z) = (1 + Az)/(1 + Bz)$ recovers the well-known Janowski starlike class $\mathscr{S}^*(A, B)$ \cite{1970Janowski}. For more examples, we refer the reader to \cite{Lecko2025,2019Masih,2024Masih,Sun2025}.

For an analytic function $f \in \mathscr{A}$ with $f'(z)\neq 0$ in $\mathbb{D}$, the pre-Schwarzian derivative and pre-Schwarzian norm are defined by
\begin{align*}
	P_f(z) := \frac{f''(z)}{f'(z)} \quad \text{and} \quad \|P_f\| := \sup_{z \in \mathbb{D}} (1 - |z|^2) |P_f(z)|,
\end{align*}
respectively. Yamashita \cite{1976Yamashita} proved in 1976 that $\|P_f\|$ is finite if and only if $f$ is uniformly locally univalent in $\mathbb{D}$. Furthermore, if $\|P_f\| < 2$, then $f$ is bounded in $\mathbb{D}$ (Kim and Sugawa \cite{2002Kim}). For univalent functions $f \in \mathscr{S}$, the classical estimate $\|P_f\| \leq 6$ holds, and conversely, the condition $\|P_f\| \leq 1$ implies univalence of $f$ in $\mathbb{D}$. Both bounds $6$ and $1$ are best possible (see Becker \cite{1972Becker} and Becker and Pommerenke \cite{1984Becker}). 
Sugawa \cite{1998Sugawa} first obtained the sharp pre-Schwarzian norm bound for strongly starlike functions of order $\alpha$. Yamashita \cite{1999Yamashita} established the sharp estimates $\|P_f\| \leq 6 - 4\alpha$ for $f \in \mathscr{S}^*(\alpha)$. In recent years, attention has been paid to more general classes. Ali and Pal \cite{Ali2023p} studied pre-Schwarzian norms for functions in $\mathscr{S}^*(A,B)$, while Wu and Zhu \cite{Wu2025} considered the corresponding norms for functions in $\mathscr{S}\mathscr{S}^*(\alpha, \beta)$.
For more related work, we refer the reader to \cite{2006Kim,2024Wang,Allu2026,Ahamed2026}.

In this paper, we investigate the pre-Schwarzian norm for functions in the Ma-Minda starlike class. In Section 2, we develop a unified and direct method to derive sharp pre-Schwarzian norm bounds for functions in $\mathscr{S}^*(\varphi)$, establishing a general theoretical framework applicable to various starlike subclasses. In Section 3, we focus on the Janowski starlike class $\mathscr{S}^*(A,B)$ and refine the results of Ali and Pal \cite{Ali2023p}. In Section 4, by applying the general framework developed in Section 2, we derive explicit pre-Schwarzian norm expressions for several classical and new subclasses of Ma-Minda starlike functions.

\section{Pre-Schwarzian norm of the Ma-Minda class}\label{sec2}
To establish sharp estimates for the pre-Schwarzian norm of functions belonging to the Ma-Minda class of starlike functions, we first recall the Dieudonn\'e's lemma.

\begin{lemma}[Dieudonn\'e's lemma]\cite{1983Duren}\label{lem:D}
	Let $z_0$ and $w_0$ be given points in $\mathbb{D}$ with $z_0 \neq 0$. Then for all $\omega \in \mathscr{W}$ satisfying $\omega(z_0) = w_0$, the region of values of $\omega'(z_0)$ is the closed disk
	\begin{align*}
		\left| \zeta - \frac{w_0}{z_0} \right| \leq \frac{|z_0|^2 - |w_0|^2}{|z_0|(1 - |z_0|^2)}.
	\end{align*}
	Moreover, if $\omega'(z_0)$ lies on the boundary of this disk, then $\omega$ has the form
	\begin{align}\label{eq:extremal}
		\omega(z) = z \cdot \frac{\lambda\left(\dfrac{z - z_0}{1 - \bar{z}_0 z}\right) + \dfrac{w_0}{z_0}}{1 + \lambda\dfrac{\bar{w}_0}{\bar{z}_0}\left(\dfrac{z - z_0}{1 - \bar{z}_0 z}\right)}
	\end{align}
	for some constant $\lambda$ with $|\lambda| = 1$. In particular, we obtain the sharp inequality
	\[
	|\omega'(z_0)| \leq \left| \frac{w_0}{z_0} \right| + \frac{|z_0|^2 - |w_0|^2}{|z_0|(1 - |z_0|^2)},
	\]
	with equality holds if and only if $\lambda = w_0 |z_0|^2 / \left(|w_0| z_0^2\right)$.  Setting $w_0 = z_0$ in \eqref{eq:extremal} leads to $\omega(z) \equiv z$.
\end{lemma}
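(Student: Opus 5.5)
The plan is to reduce Dieudonné's lemma to the Schwarz–Pick lemma applied to a single auxiliary Schwarz function. Since $\omega(0)=0$, the Schwarz lemma gives $\omega(z)=z\,g(z)$, where $g$ is analytic in $\mathbb{D}$ with $|g|\le 1$. If $|g|$ attains the value $1$ somewhere, then $g$ is a unimodular constant. That case is excluded by $|w_0|<|z_0|$ unless $w_0/z_0$ is unimodular, which happens only when $|w_0|=|z_0|$, so I treat that case separately. In the generic case $|w_0|<|z_0|$ we have $g:\mathbb{D}\to\mathbb{D}$ with $g(z_0)=w_0/z_0=:c$, $|c|<1$.

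Differentiating $\omega=zg$ gives $\omega'(z_0)=g(z_0)+z_0g'(z_0)=c+z_0g'(z_0)$. The next step is the Schwarz–Pick inequality at $z_0$:
\[
|g'(z_0)|\le \frac{1-|c|^2}{1-|z_0|^2}.
\]
Moreover, every value $g'(z_0)$ in this closed disk is attained. Conjugate $g$ by the disk automorphisms $\phi_{z_0}(z)=(z-z_0)/(1-\bar z_0 z)$ and $\phi_c$, so that $h=\phi_c\circ g\circ\phi_{z_0}^{-1}$ is a Schwarz function. Then $g'(z_0)$ is $h'(0)$ times a fixed factor, and $h'(0)$ ranges over the closed unit disk. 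Multiplying by $z_0$ and using
\[
1-|c|^2=\frac{|z_0|^2-|w_0|^2}{|z_0|^2},
\]
the radius becomes $|z_0|(|z_0|^2-|w_0|^2)/(|z_0|^2(1-|z_0|^2))=(|z_0|^2-|w_0|^2)/(|z_0|(1-|z_0|^2))$. This is exactly the stated disk centered at $w_0/z_0$.

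For the boundary case, equality in Schwarz–Pick means $|h'(0)|=1$, so $h(\zeta)=\lambda\zeta$ with $|\lambda|=1$. Inverting, $g=\phi_c^{-1}(\lambda\phi_{z_0})$, that is,
\[
g(z)=\frac{\lambda\phi_{z_0}(z)+c}{1+\bar c\lambda\phi_{z_0}(z)}.
\]
Since $\bar c=\bar w_0/\bar z_0$, the product $zg(z)$ is exactly formula \eqref{eq:extremal}. The degenerate case $|w_0|=|z_0|$ is handled in the same way: there $g\equiv c$ is constant, the disk has radius zero and is the single point $c$, and the formula holds with the convention that \eqref{eq:extremal} reduces to $cz$; in particular $w_0=z_0$ gives $\omega(z)\equiv z$. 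The sharp modulus bound then follows from the triangle inequality. Equality occurs exactly when $z_0g'(z_0)$ has the same argument as $c$. Computing $g'(z_0)=\lambda\phi_{z_0}'(z_0)(1-|c|^2)$ with $\phi_{z_0}'(z_0)=1/(1-|z_0|^2)$, the argument condition becomes $\lambda z_0\parallel w_0/z_0$, i.e. $\lambda=w_0\bar z_0/(|w_0|\,|z_0|\cdot z_0/|z_0|\cdots)$. Simplifying, this is $\lambda=w_0|z_0|^2/(|w_0|z_0^2)$. This requires $w_0\ne0$; when $w_0=0$ every $\lambda$ gives equality.

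The main obstacle is only bookkeeping. One part is the careful verification that the Möbius conjugation reproduces the exact extremal formula \eqref{eq:extremal}. The other is fixing the argument of $\lambda$ correctly, for which I would check directly that $\lambda z_0\cdot\overline{(w_0/z_0)}$ is positive real.
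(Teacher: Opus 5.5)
Your proof is correct. The paper does not prove this lemma; it simply quotes it from Duren. Your argument is the standard proof of the cited result: factor $\omega(z)=z\,g(z)$, apply the Schwarz--Pick lemma to $g$ at $z_0$ via M\"obius conjugation (including the converse construction showing that the whole closed disk is attained), and read off the extremal functions from the equality case $h(\zeta)=\lambda\zeta$. One small tidy-up: in the degenerate case $|w_0|=|z_0|$ no convention is needed. There $|c|=1$, so the numerator $\lambda\phi_{z_0}+c$ equals $c\,(1+\bar c\lambda\phi_{z_0})$, and the extremal formula is literally $cz$ for every unimodular $\lambda$.
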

For $0 < s \leq t < 1$, define
\begin{align*}
	K(s,t) = \frac{s}{t} + \frac{t^2 - s^2}{t(1 - t^2)}.
\end{align*}
Note that $\lim_{(s,t)\to(0,0)} K(s,t) = 1$ for $0 \leq s \leq t < 1$, so we set $K(0,0) = 1$. Kim and Sugawa \cite{2006Kim} established the following inequality:
\begin{align}\label{lem:kimsugawa}
	K(s,t) \leq \frac{1 - s^2}{1 - t^2}, \quad 0 \leq s \leq t < 1.
\end{align}

Next, we formulate the sharp norm estimate for the class \(\mathscr{S}^*(\varphi)\). The following result is inspired by the methodological framework employed in \cite{2011Kanas}, where Kanas and Sugawa derived sharp bounds for the Schwarzian norm of the class of close-to-convex functions. 
\begin{theorem}\label{thm:main}
	For \(f\in \mathscr{S}^*(\varphi)\), the sharp inequality
	\[||P_f||\leq \sup_{0<s<t<1} F(s,t)\]
	holds, where
	\begin{align}\label{Fpq}
		F(s,t)=(1-s^2) \sup_{|z|=s}\left|\frac{\varphi'(w)}{\varphi(w)}\right|+\frac{(1-t^2)}{t}\sup_{|w|=s}\left|\varphi(w)-1\right|.
	\end{align}
\end{theorem}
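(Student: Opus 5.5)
Write $zf'(z)/f(z)=\varphi(\omega(z))$ with $\omega\in\mathscr{W}$. Logarithmic differentiation gives
\[
\frac{f''(z)}{f'(z)}=\frac{z\,\varphi'(\omega(z))\,\omega'(z)}{\varphi(\omega(z))}+\frac{\varphi(\omega(z))-1}{z}.
\]
This splits $P_f$ into a term involving $\omega'$ and a term involving only $\omega$. Fix $z$ with $|z|=t\in(0,1)$ and set $s=|\omega(z)|$. By Schwarz's lemma $s\le t$. Dieudonn\'e's lemma (Lemma~\ref{lem:D}) then gives $|\omega'(z)|\le K(s,t)$, so
\[
(1-t^2)|P_f(z)|\le (1-t^2)\,t\,K(s,t)\sup_{|w|=s}\left|\frac{\varphi'(w)}{\varphi(w)}\right|+\frac{1-t^2}{t}\sup_{|w|=s}|\varphi(w)-1|.
\]
The Kim--Sugawa inequality \eqref{lem:kimsugawa} bounds $(1-t^2)K(s,t)\le 1-s^2$. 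Using also $t<1$, the first term is at most $(1-s^2)\sup_{|w|=s}|\varphi'/\varphi|$, so $(1-t^2)|P_f(z)|\le F(s,t)$. I read the supremum in \eqref{Fpq} as being over $|w|=s$.

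The edge cases are handled separately.
\begin{itemize}
\item If $s=t$ at some interior point, Schwarz's lemma forces $\omega(z)=e^{i\theta}z$. Then $f$ is a rotation of the extremal function $f_0$ given by $zf_0'/f_0=\varphi(z)$, and the bound is the limit of $F(s,t)$ as $s\uparrow t$. Since the supremum in the theorem is over $s<t$, I will justify that $F$ extends continuously to the diagonal.
\item If $z=0$, then $(1-t^2)|P_f(0)|=|P_f(0)|=2|a_2|=|\varphi'(0)\omega'(0)|$, which is dominated by the limit of $F(s,t)$ as $t\to 0$.
\end{itemize}
Taking the supremum over $z$ then gives $\|P_f\|\le\sup_{0<s<t<1}F(s,t)$.

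Sharpness is the main obstacle. The first estimate loses twice: once in replacing $t\,(1-t^2)K(s,t)$ by $1-s^2$, and once in aligning the two terms in phase. So equality cannot be expected pointwise, only approached in the supremum. My plan has three steps.
\begin{enumerate}
\item Use the extremal Schwarz functions of Lemma~\ref{lem:D}, with $\lambda$ chosen so that $|\omega'(z_0)|$ equals the right-hand side of Dieudonn\'e's bound.
\item Take $z_0=t>0$ real and $w_0$ on $|w|=s$ at a point where $|\varphi'/\varphi|$ and $|\varphi-1|$ attain their suprema. By the real-axis symmetry of $\varphi$ and the starlikeness of $\varphi(\mathbb{D})$ with respect to $1$, I expect both suprema to be attained simultaneously at $w=\pm s$, with aligned phases, for the concrete classes of interest. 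I will check this alignment carefully.
\item Let $t\to1^-$ along suitable $s=s(t)$. Then $t\to1$ and $(1-t^2)K(s,t)\to 1-s^2$ in the regime where the supremum is approached.
\end{enumerate}
If the maximizing pair $(s,t)$ of $F$ is interior, I will instead argue that the supremum of $F$ is approached where the Kim--Sugawa inequality becomes asymptotically tight, namely as $t\to1$, or along $s=t$ where $K(t,t)=1$. In the latter case the rotation of $f_0$ attains the bound exactly. Establishing that one of these two regimes always carries the supremum is the step I would examine first, since it determines whether the stated bound is genuinely sharp.
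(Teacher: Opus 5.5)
Your proof of the inequality follows the paper's route: subordination, logarithmic differentiation, Lemma~\ref{lem:D}, then \eqref{lem:kimsugawa}. However, the identity you start from is wrong, because there is no factor $z$ in the first term. The correct formula is
\[
P_f(z)=\frac{\varphi'(\omega(z))\,\omega'(z)}{\varphi(\omega(z))}+\frac{\varphi(\omega(z))-1}{z},
\]
and consequently $P_f(0)=2a_2=2\varphi'(0)\omega'(0)$, not $\varphi'(0)\omega'(0)$. As written, your intermediate inequality undercounts the first term by the factor $t$ and is not valid. With the correct identity, the first term is at most $(1-t^2)K(s,t)|P(w)|\le(1-s^2)\sup_{|w|=s}|P|$ directly, with no appeal to $t<1$. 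This gives exactly the paper's estimate $(1-t^2)|P_f(z)|\le F(s,t)$. Your edge cases are unnecessary: \eqref{lem:kimsugawa} holds for $s\le t$, $F$ is continuous up to the diagonal, and $z=0$ is covered by continuity of $(1-|z|^2)|P_f(z)|$.

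The genuine gap is sharpness. Your plan does not prove it, and it cannot be completed at this generality, because the sharpness assertion is false.

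\begin{itemize}
\item Only the second term of $F$ depends on $t$, through the decreasing factor $(1-t^2)/t$. Hence $\sup F=\sup_{0<s<1}F(s,s)$, and the regime $t\to1$ of your step~3 never carries the supremum. The question you postpone resolves at once in favour of the diagonal, where the Kim--Sugawa loss $(t-s)(1-s)(1-t)/t$ vanishes.
\item What is left is exactly the alignment you only ``expect'' in step~2. On the diagonal, $|\omega(z)|=|z|$ forces $f$ to be a rotation of $f_0$. This gives $(1-s^2)|P(w)+Q(w)/w|$, with $P,Q$ as in \eqref{PQ}. That value equals $F(s,s)$ only if $|P|$ and $|Q|$ peak at a common point of $|w|=s$ where $wP(w)$ and $Q(w)$ have the same argument.
\item The paper's one-line justification (\emph{equality holds when $\omega(z)=z$}) tacitly assumes the same alignment. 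It is correct under the hypotheses of Theorems~\ref{Thm:nonnegativephi} and~\ref{Thm:oddandeven}, where both peaks occur at $w=s$ or both at $w=-s$, but not in general.
\end{itemize}

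In fact, fix $|z|=t$ and $w=\omega(z)$ with $|w|=s<t$, and maximize $(1-t^2)|P_f(z)|$ over Dieudonn\'e's disk without splitting $wP(w)+Q(w)$. The exact maximum is $\bigl[(1-t^2)|wP(w)+Q(w)|+(t^2-s^2)|P(w)|\bigr]/t$. This has the form $a/t+bt$ with $b>0$ whenever $a<0$, so it is largest at $t=s$ or as $t\to1$. Therefore the sharp constant for $\mathscr{S}^*(\varphi)$ is $\max\{\|P_{f_0}\|,\sup_{\mathbb{D}}(1-|w|^2)|P(w)|\}$.

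Take $A=1$, $B=-1/2$ in Case~3 of Theorem~\ref{thm:AB}, where $|P|$ peaks at $-s$ and $|Q|$ at $+s$. There the sharp constant is $\max\{3,2\}=3$, whereas $\sup F=\sup_s\frac32(1+s)(2-s-s^2/2)/(1-s^2/4)\approx3.32$. So the sharpness claim must be restricted to the aligned case. Your step~2 is a hypothesis that has to be added, not something you can verify.
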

\begin{proof}
	For \(f\in \mathscr{S}^*(\varphi)\), we have
	\begin{align*}
		\frac{zf'(z)}{f(z)}\prec\varphi(z).
	\end{align*}
	From the definition of subordination, there exists a function \(\omega\in\mathscr{W}\) such that
	\begin{align*}
		\frac{zf'(z)}{f(z)}=\varphi(\omega(z)).
	\end{align*}
	Differentiating both sides logarithmically yields
	\begin{align*}
		\frac{1}{z}+\frac{f''(z)}{f'(z)}-\frac{f'(z)}{f(z)}=\frac{\varphi'(\omega(z))\omega'(z)}{\varphi(\omega(z))}.
	\end{align*}
	Accordingly, the pre-Schwarzian derivative of $f$ is given by
	\begin{align*}
		P_f(z)=\frac{\varphi'(\omega(z))\omega'(z)}{\varphi(\omega(z))}+\frac{\varphi(\omega(z))-1}{z}.
	\end{align*}
	For any fixed $z\in\mathbb{D}$ and $w=\omega(z)$, an application of Lemma \ref{lem:D} gives 
	\begin{align*}
		(1-|z|^2)|P_f(z)| &\leq\frac{|\varphi'(w)|}{|\varphi(w)|}\cdot\left(\frac{|z|^2-|w|^2}{|z|}+\frac{(1-|z|^2)|w|}{|z|}\right)+\frac{(1-|z|^2)\left(|\varphi(w)-1|\right)}{|z|}\\
		&=(1-|z|^2)\left(\frac{|\varphi'(w)|}{|\varphi(w)|}\cdot K(|w|,|z|)+\frac{|\varphi(w)-1|}{|z|} \right) .
	\end{align*}
	Letting $t:=|z|$ and $s:=|w|$ and utilizing inequality \eqref{lem:kimsugawa}, we deduce
	\[
	(1 - |z|^2)|P_f(z)| \leq (1 - s^2)\sup_{|w|=s} \frac{|\varphi'(w)|}{|\varphi(w)|} + \frac{(1 - t^2)}{t} \sup_{|w|=s} |\varphi(w) - 1|.
	\]
	The equality holds when $\omega(z)=z$. Consequently, we obtain
	\[
	\|P_f\| \leq \sup_{0 < s < t < 1} \left( (1 - s^2)\sup_{|w|=s} \frac{|\varphi'(w)|}{|\varphi(w)|} + \frac{1 - t^2}{t} \sup_{|w|=s} |\varphi(w) - 1| \right).
	\]
	We complete the proof.
\end{proof}

For $\varphi\in\mathcal{M}$, we define
\begin{align}\label{PQ}
	P(z):=\frac{\varphi'(z)}{\varphi(z)},\quad 
	Q(z):=\varphi(z)-1,
\end{align}
which will be used throughout this paper. 
Using the above notations, we may rewrite equation \eqref{Fpq} as
\begin{align}\label{Fst}
	F(s,t)&=(1-s^2)\sup_{|w|=s}\left| P(w)\right|+\frac{1-t^2}{t}\sup_{|w|=s}\left|Q(w)\right|.
\end{align}
From the definition of $\varphi(z)$, $P(z)$ and $Q(z)$ are readily analytic in $\mathbb{D}$. Accordingly, we represent these functions by their Maclaurin  series expansions
\[
P(z) = \sum_{n=1}^{\infty}\gamma_n z^n \quad \text{and} \quad Q(z) = \sum_{n=1}^{\infty}p_n z^n.
\]
If $\gamma_n \geq 0$ and $p_n \geq 0$ for all integers $n \geq 1$, then we have
\begin{align*}
	\sup_{|w|=s}|P(w)|=\sum_{n=1}^{\infty}\gamma_n s^n=P(s)\quad \text{and}\quad
	\sup_{|w|=s}|Q(w)|=\sum_{k=1}^{\infty}p_n s^{n}=Q(s).
\end{align*}
Thus, $F(s,t)$ in \eqref{Fst} can be rewritten as
\begin{align}\label{nonnegativeFst}
	F(s,t)
	&=(1-s^2)P(s)+\frac{1-t^2}{t}Q(s)=(1-s^2)\frac{\varphi'(s)}{\varphi(s)}+\frac{(1-t^2)\big(\varphi(s)-1\big)}{t}.
\end{align}
Differentiating $F(s,t)$ in \eqref{nonnegativeFst} with respect to $t$, 
we obtain
\begin{align*}
	\frac{\partial F(s,t)}{\partial t}&=-\frac{\left(t^2+1\right) (\varphi (s)-1)}{t^2}=-\frac{t^2+1}{t^2}Q(s),
\end{align*}
which is non-positive for $0<s<t<1$. Thus $F(s,t)$ is non-increasing with respect to $t$ on $[s,1]$. Accordingly, $F(s,t)\leq F(s,s)$. We then present the following result.
\begin{theorem}\label{Thm:nonnegativephi}
	Let $\varphi \in \mathcal{M}$. Suppose that the functions $P(z)$ and $Q(z)$ defined in \eqref{PQ} possess non-negative Maclaurin coefficients. 
	Then for every function $f\in \mathscr{S}^*(\varphi)$,
	\begin{align*}
		\|P_f\|\leq \sup_{0<s<1}\frac{1-s^2}{s} \left( \frac{s\varphi'(s)}{\varphi(s)} + \varphi(s) - 1 \right).
	\end{align*}
\end{theorem}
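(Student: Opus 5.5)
The plan is to reduce the statement to Theorem \ref{thm:main} by showing that, under the coefficient hypothesis, the two-variable supremum collapses onto the diagonal $t=s$. I take Theorem \ref{thm:main} as given, so that $\|P_f\|\le \sup_{0<s<t<1}F(s,t)$ with $F$ in the form \eqref{Fst}.

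\textbf{Step 1: evaluate the two suprema.} Since $P(w)=\sum_{n\ge1}\gamma_n w^n$ with $\gamma_n\ge 0$, the triangle inequality gives $|P(w)|\le \sum \gamma_n |w|^n=P(s)$ for $|w|=s$, and equality holds at $w=s$. The same argument with $p_n\ge0$ gives $\sup_{|w|=s}|Q(w)|=Q(s)=\varphi(s)-1$. Substituting these into \eqref{Fst} yields the closed form \eqref{nonnegativeFst}.

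\textbf{Step 2: monotonicity in $t$.} For fixed $s$, I differentiate $F(s,t)$ with respect to $t$ and get $-\frac{1+t^2}{t^2}\,Q(s)$. This is $\le 0$ because $Q(s)=\sum p_n s^n\ge 0$. So $F(s,\cdot)$ is non-increasing on $[s,1)$, and $F(s,t)\le F(s,s)$ for every $t\in(s,1)$.

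\textbf{Step 3: evaluate the diagonal.} By Step 2, $\sup_{0<s<t<1}F(s,t)\le\sup_{0<s<1}F(s,s)$, and
\[
F(s,s)=(1-s^2)\frac{\varphi'(s)}{\varphi(s)}+\frac{1-s^2}{s}\bigl(\varphi(s)-1\bigr)=\frac{1-s^2}{s}\left(\frac{s\varphi'(s)}{\varphi(s)}+\varphi(s)-1\right).
\]
Conversely, $F$ is continuous, and $F(s,s)$ is a limit of values $F(s,t)$ with $t\downarrow s$. Hence the two suprema coincide, although only the inequality is needed for the stated bound.

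\textbf{Main obstacle.} The only delicate point is making sure the bound from Theorem \ref{thm:main} legitimately covers the boundary case $s=t$. This case comes from $|\omega(z)|=|z|$, which by Schwarz's lemma forces $\omega$ to be a rotation. It is handled by the continuity/limit remark above. One could also check it directly: for $\omega(z)=z$, the Koebe-type extremal $f$ with $zf'/f=\varphi(z)$ gives $(1-s^2)|P_f(s)|=F(s,s)$. This shows the bound is attained along the positive real axis, which is consistent with sharpness, although sharpness is not claimed in the statement.
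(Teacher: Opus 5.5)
Your proposal is correct and follows the paper's own argument: the non-negative coefficients give $\sup_{|w|=s}|P(w)|=P(s)$ and $\sup_{|w|=s}|Q(w)|=Q(s)$ in \eqref{Fst}, then $\partial F/\partial t=-\frac{1+t^2}{t^2}Q(s)\le 0$, so the supremum from Theorem~\ref{thm:main} collapses onto the diagonal $t=s$. Your continuity remark for the boundary case $s=t$ and your observation that $\omega(z)=z$ attains $F(s,s)$ are accurate extras. One harmless slip, shared with the paper: the expansion of $P$ should start at $n=0$, since $P(0)=\varphi'(0)>0$.
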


On the other hand, if $P(z)$ and $-Q(z)$  take non-positive odd-order Maclaurin coefficients and non-negative even-order Maclaurin coefficients, then \eqref{Fst} can be rewritten as 
\begin{align*}
	F(s,t)&=(1-s^2)P(-s)+\frac{1-t^2}{t}(-Q(-s))\\
	&=(1-s^2)\frac{\varphi'(-s)}{\varphi(-s)}-\frac{(1-t^2)(\varphi(-s)-1)}{t}.
\end{align*}
Adopting the same analytical strategy as in the non-negative coefficient case. Since
\begin{align*}
	\frac{\partial F(s,t)}{\partial t}&=\frac{\left(t^2+1\right) (\varphi (-s)-1)}{t^2}=\frac{t^2+1}{t^2}Q(-s)\leq 0
\end{align*}
for $0<s<t<1$, it immediately follows that $F(s,t)\leq F(s,s)$. Let $\phi(s)=\varphi(-s)$, we establish the following result.
\begin{theorem}\label{Thm:oddandeven}
	Let $\varphi \in \mathcal{M}$. Suppose that $P(z)=\varphi'(z)/\varphi(z)$ and $1-\varphi(z)$ admit non-positive odd Taylor coefficients and non-negative even Taylor coefficients at $z=0$. If the function $\phi(z)=\varphi(-z)$ 
	, then for any $f\in \mathscr{S}^*(\varphi)$, the inequality
	\begin{align*}
		\|P_f\|\leq \sup_{0<s<1}\frac{1-s^2}{s}\left(-\frac{s\phi'(s)}{\phi(s)}-\phi(-s)+1\right)
	\end{align*}
	holds.
\end{theorem}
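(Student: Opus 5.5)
The plan is to specialize Theorem \ref{thm:main} in the same way Theorem \ref{Thm:nonnegativephi} was obtained, with the evaluation point $w=s$ replaced by $w=-s$.

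\textbf{Step 1: evaluate the two suprema.} Write $P(z)=\sum_{n\ge1}\gamma_n z^n$ and $Q(z)=\sum_{n\ge1}p_n z^n$. By hypothesis $\gamma_n(-1)^n\ge 0$ and $-p_n(-1)^n\ge0$ for all $n$. Substituting $z=-u$ gives
\[
P(-u)=\sum_{n\ge1}\gamma_n(-1)^n u^n, \qquad -Q(-u)=\sum_{n\ge1}\bigl(-p_n(-1)^n\bigr)u^n,
\]
and both are power series in $u$ with non-negative coefficients. For $|w|=s$ this gives
\[
|P(w)|\le\sum_{n}|\gamma_n|s^n=P(-s), \qquad |Q(w)|\le\sum_n|p_n|s^n=-Q(-s).
\]
Equality holds at $w=-s$. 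Hence $\sup_{|w|=s}|P(w)|=P(-s)$ and $\sup_{|w|=s}|Q(w)|=-Q(-s)$. Inserting these into \eqref{Fst} yields
\[
F(s,t)=(1-s^2)P(-s)-\frac{1-t^2}{t}Q(-s).
\]

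\textbf{Step 2: monotonicity in $t$.} Differentiating in $t$ gives
\[
\frac{\partial F}{\partial t}=\frac{1+t^2}{t^2}\,Q(-s).
\]
This is $\le 0$, because $Q(-s)=-\sum_n|p_n|s^n\le 0$. So $F(s,\cdot)$ is non-increasing on $[s,1)$, and therefore $\sup_{0<s<t<1}F(s,t)\le\sup_{0<s<1}F(s,s)$. This is the same limiting argument used before Theorem \ref{Thm:nonnegativephi}; the supremum over $t>s$ is approached as $t\to s^+$, and continuity lets us evaluate at $t=s$.

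\textbf{Step 3: rewrite in terms of $\phi(z)=\varphi(-z)$.} Since $\phi'(s)=-\varphi'(-s)$, we have $P(-s)=\varphi'(-s)/\varphi(-s)=-\phi'(s)/\phi(s)$ and $Q(-s)=\phi(s)-1$. Then
\[
F(s,s)=\frac{1-s^2}{s}\left(-\frac{s\phi'(s)}{\phi(s)}-\phi(s)+1\right).
\]
Combining this with Theorem \ref{thm:main} gives the claimed bound.

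The only delicate point is notation rather than mathematics. The term written as $\phi(-s)$ in the statement must be read as $\varphi(-s)=\phi(s)$, which is what the computation above produces, and I would state this explicitly. I would also check that both pieces of the bracket are non-negative, which follows from Step 1 since they equal $P(-s)$ and $-Q(-s)$ up to positive factors, so the bound is meaningful.
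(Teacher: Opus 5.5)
Your proposal is correct and follows the paper's own argument: specialize Theorem \ref{thm:main}, use the alternating sign pattern to get $\sup_{|w|=s}|P(w)|=P(-s)$ and $\sup_{|w|=s}|Q(w)|=-Q(-s)$, note that $\partial F/\partial t=\frac{1+t^2}{t^2}Q(-s)\le 0$, and set $t=s$. You are right to read $\phi(-s)$ in the statement as $\phi(s)=\varphi(-s)$: this is what the derivation produces, and it is what the paper actually uses when it applies the theorem to $\mathscr{S}^*(A,B)$ and $S^*_{lim}$. One cosmetic slip, inherited from the paper, is that the expansion of $P$ should start at $n=0$ because $P(0)=\varphi'(0)>0$; this is harmless, since that term is even-indexed and positive.
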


\section{Pre-Schwarzian norm of the Janowski starlike class}\label{sec4}
We begin with some auxiliary results.
\begin{lemma}\label{lem:hab}
	For the function defined by
	\begin{align}\label{hab}
		h_{AB}(s)=A^2 B s^4+s^3 (2 A^2+2 A B)+s^2(A^2 B+5 A+2 B)+s (4 A B+4)+A+2 B,
	\end{align}
	we have the following assertions.
	\begin{itemize}
		\item[(1)] For $-1 \leq B \leq -A < 0 < A \leq 1$, the function $h_{AB}(s)$ is strictly increasing on $(0,1)$ and admits exactly one root $s_1 \in (0,1]$. In particular, $s_1 = 1$ whenever $B = -1$.
		
		\item[(2)] For $-1 \leq B < A \leq 0$, the equation $h_{AB}(s) = 0$ has exactly one root $s_1 \in (0,1]$. Furthermore, $h_{AB}(s) < 0$ for all $s \in (0,s_1)$ and $h_{AB}(s) > 0$ for all $s \in (s_1,1)$. In particular, $s_1 = 1$ whenever $B = -1$.
		
		\item[(3)] For $-A \leq B \leq -A/2 < 0 < A \leq 1$, we have $h_{AB}(s) < 0$ for all $s \in (-1,0)$. Moreover, the equality $h_{AB}(s) = 0$ holds if and only if either $s = 0$ with $A + 2B = 0$ or $s = -1$ with $A = 1$.
		
		\item[(4)] For $-A/2 \leq B < A <1$, the equation $h_{AB}(s) = 0$ has exactly one zero $s_2 \in (-1,0]$. In addition, $h_{AB}(s) < 0$ for $s \in (-1,s_2)$ and $h_{AB}(s) > 0$ for $s \in (s_2,0)$. In particular, $s_2 = 0$ whenever $A + 2B = 0$. For $A = 1$, we have $h_{AB}(-1)=0$. If $1/3\leq B<1$,  $h_{AB}(s) >0$ for all $s\in(-1,0]$. If $-1\leq B<1/3$, the equation $h_{AB}(s) = 0$ has a unique root $s_2\in(-1,0]$.
	\end{itemize}
\end{lemma}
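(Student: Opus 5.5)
The plan is elementary real analysis of the quartic $h_{AB}$: evaluate it at the endpoints $s=-1,0,1$, then obtain the sign pattern on each interval from monotonicity or convexity and the intermediate value theorem. The endpoint values are
\[
h_{AB}(0)=A+2B,\qquad h_{AB}(1)=2A^2B+2A^2+6AB+6A+6B+4=2(1+B)(A^2+3A+2),
\]
\[
h_{AB}(-1)=2A^2B-2A^2-2AB-4A-2B-4+5A+\cdots .
\]
The last one I will expand carefully. My expectation is that it factors as $-2(1-A)(\cdots)$, or a similar expression vanishing at $A=1$, which matches the claim $h_{AB}(-1)=0$ for $A=1$. The factorization $h_{AB}(1)=2(1+B)(A+1)(A+2)$ gives $h_{AB}(1)\ge 0$, with equality exactly when $B=-1$. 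This accounts for the statements "$s_1=1$ whenever $B=-1$" in parts (1) and (2).

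For part (1), I will show that $h_{AB}'(s)>0$ on $(0,1)$ by grouping terms:
\[
h_{AB}'(s)=4A^2Bs^3+6A(A+B)s^2+2(A^2B+5A+2B)s+4(AB+1).
\]
Here $A+B\le 0$ and $B<0$, so this needs care. I plan to use $|B|\le 1$ and $A\le 1$ to bound the negative terms by the positive ones, in particular $4(1+AB)\ge 0$ and $10As\ge$ the remaining negatives. The condition $B\le -A$ gives $h_{AB}(0)=A+2B<0$, while $h_{AB}(1)\ge 0$. Monotonicity then yields a unique root $s_1\in(0,1]$.

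For part (2), where $A\le 0$, I expect that monotonicity may fail. In that case I will instead show that $h_{AB}$ is convex, or has at most one sign change, on $(0,1)$. The first option is Descartes' rule of signs applied to the coefficient sequence: the leading coefficient $A^2B\le 0$, $A+2B<0$, and $4AB+4>0$. Combined with $h_{AB}(0)<0$ and $h_{AB}(1)\ge 0$, an argument that there is exactly one sign change in $(0,1)$ gives the claimed pattern.

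Parts (3) and (4) concern $s\in(-1,0)$, and I will substitute $s=-u$ with $u\in(0,1)$. In (3), both $h_{AB}(0)=A+2B\le 0$ and $h_{AB}(-1)\le 0$ hold. I will show negativity in between either via convexity or by writing $h_{AB}(-u)$ as a combination of nonpositive terms using $-A\le B\le -A/2$. The equality cases then follow from the endpoint factorizations. In (4), $h_{AB}(0)\ge 0$ and $h_{AB}(-1)<0$ when $A<1$, so the intermediate value theorem gives a root. Uniqueness again requires monotonicity of $h_{AB}(-u)$ in $u$, which I will check through the sign of the derivative on $(-1,0)$. When $A=1$ the value $h_{AB}(-1)$ vanishes, and the dichotomy at $B=1/3$ should come from the sign of $h_{AB}'(-1)$: I will compute $h_{1B}(s)/(s+1)$ and check where the cubic quotient changes sign on $(-1,0]$.

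The main obstacle is establishing uniqueness of the roots, that is, proving monotonicity or a single sign change of a quartic over a two-parameter region. My approach is to view each derivative as linear in $B$ and check it on the boundary lines of the parameter region, which reduces the problem to one-variable polynomial inequalities.
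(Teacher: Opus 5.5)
Your proposal has a genuine gap at the step you yourself flag as the main obstacle, uniqueness of the zero. In part (4) the mechanism you propose is false. You want uniqueness from monotonicity of $h_{AB}$ on $(-1,0)$. But $h_{AB}'(-1)=2(1-B)(3A-2)(A-1)$ and $h_{AB}'(0)=4(1+AB)$, so for $2/3<A<1$ the derivative changes sign on $(-1,0)$. For example, $A=\frac45$, $B=0$ gives $h_{AB}(s)=\frac{32}{25}s^3+4s^2+4s+\frac45$ with $h_{AB}'(-1)=-\frac{4}{25}$: the function first drops below $h_{AB}(-1)=-\frac{12}{25}$ and only then rises to $h_{AB}(0)=\frac45$.

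What is needed is a unimodality argument, which is the paper's route. For $-A/2\le B<A\le 1$ one has $h_{AB}'''(s)=12A(A+B+2ABs)\ge 0$ on $[-1,0]$, so $h_{AB}''$ is nondecreasing there. If $h_{AB}''(-1)=2\bigl[(7A^2-6A+2)B+5A-6A^2\bigr]\ge 0$, then $h_{AB}$ is convex on $[-1,0]$. Otherwise the constraint $B\ge -A/2$ forces $7A^2+6A-8>0$, i.e. $A>(\sqrt{65}-3)/7>2/3$. Hence (for $A<1$) $h_{AB}'(-1)<0<h_{AB}'(0)$, and convexity of $h_{AB}'$ gives a single critical point, so $h_{AB}$ decreases and then increases. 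In either case $h_{AB}(-1)<0\le h_{AB}(0)$ gives exactly one zero in $(-1,0]$ with the stated sign pattern.

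Part (2) has the same problem. There $h_{AB}$ is strictly \emph{concave} on $[0,1]$, not convex. Descartes' rule on the coefficients cannot give uniqueness either: for $-1\le B<A<0$ the signs are $-,+,-,+,-$. Since the leading coefficient is negative while $h_{AB}(1)\ge 0$, there is always a zero in $[1,\infty)$, so the rule cannot exclude three zeros in $(0,1)$. Your linear-in-$B$ idea does work if you apply it to $h_{AB}''$:
$h_{AB}''=B\bigl(12(As+\frac12)^2+2A^2+1\bigr)+12A^2s+10A<12A^3s^2+24A^2s+2A(A^2+7)\le 2A(7A^2+12A+7)\le 0$ on $[0,1]$. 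Concavity together with $h_{AB}(0)<0<h_{AB}(1)$ settles $B>-1$. For $B=-1$, concavity alone still permits a zero inside $(0,1)$; you need $h_{AB}'(1)=2(B+1)(3A^2+5A+2)=0$ to get $h_{AB}'>0$ on $[0,1)$.

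Smaller points:
\begin{itemize}
\item In (1), your grouping ``$10As\ge$ the remaining negatives'' fails for small $A$. At $A=\frac{1}{10}$, $B=-1$, $s=\frac12$ the other $B$-terms total $-2.165$ against $10As=\frac12$. The fix: the $B$-coefficient of $h_{AB}'$ is positive for $s>0$, so $h_{AB}'\ge h_{AB}'|_{B=-1}=2(1-s)\bigl(2(1-A)+A(3-A)s+2A^2s^2\bigr)>0$.
\item In (3), the convexity option fails, since $A=1$, $B=-\frac12$ gives $h_{AB}''(-1)=-5$. Your second option works: the $B$-coefficient of $h_{AB}$ is at least $(s+1)^2(s^2+2)>0$ on $(-1,0)$, which reduces everything to $B=-A/2$.
\item For $A=1$ one has $h_{AB}'(-1)=0$ for every $B$, so the threshold $B=\frac13$ does not come from its sign. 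Your division idea does give it: $h_{1B}(s)=(s+1)^2(Bs^2+2s+2B+1)$, whose quadratic factor equals $3B-1$ at $s=-1$. This is cleaner than the paper's treatment.
\item Note also that $h_{AB}(-1)=2(B-1)(A-1)(A-2)$, and your expanded $h_{AB}(1)$ should contain $4B$, not $6B$.
\end{itemize}
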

\begin{proof}
	It is easy to check that
	\begin{align}\label{h01}
		h_{AB}(0)=A+2B,\qquad \text{and }\qquad h_{AB}(1)=2 \left(A^2+3 A+2\right) (B+1).
	\end{align}
	
	(1) For $-1 \leq B\leq-A<0< A \leq 1$, one can readily verify that $h_{AB}(0)< 0$ and $h_{AB}(1) \geq 0$, which implies that $h_{AB}(s)$ admits at least one root in $(0,1]$. For $0 < A < 1$ and $s \in (0,1)$, we further establish that
	\begin{align*}
		h'_{AB}(s)&=B \left(A^2 \left(4 s^3+2 s\right)+2 A \left(3 s^2+2\right)+4 s\right)+6 A^2 s^2+10 A s+4\\
		&\geq2 (1-s) \left(2 A^2 s^2+\left(3 A-A^2\right) s-2 A+2\right)\\
		&=2(1-s)\left( 2(1-A)+As(3-A)+2A^2s^2\right)\\
		&\geq 4(1-s)\left( (1-A)+As+A^2s^2\right)> 0.
	\end{align*}
	Consequently, $h_{AB}(s)$ is  strictly increasing on $(0,1)$ and has exactly one zero in $(0,1]$. In the special case $B = -1$, this zero is attained at $s_1 = 1$. 
	
	(2) For $-1 \leq B < A \leq 0$, by \eqref{h01}, we observe that $h_{AB}(0) < 0$ and $h_{AB}(1) \geq 0$. Differentiating $h_{AB}(s)$ with respect to $s$, we obtain
	\begin{align*}
		h_{AB}''(s)&=B \left(12 \left(A s+\frac{1}{2}\right)^2+2 A^2+1\right)+12 A^2 s+10 A\\
		&< A \left(12 \left(A s+\frac{1}{2}\right)^2+2 A^2+1\right)+12 A^2 s+10 A\\
		&=12 A^3 s^2+24 A^2 s+2 \left(A^2+7\right) A\\
		&\leq 2 A \left(7 A^2+12 A+7\right)
		\leq 0,
	\end{align*}
	for all $s\in [0,1]$. This implies that $h_{AB}(s)$ is concave on the interval $[0,1]$.
	
	When $B = -1$, we have $h_{AB}(1) = 0$ and $h'_{AB}(1) = 2(3A^2 + 5A + 2)(B + 1) = 0$. It follows that $h'_{AB}(s) > 0$ for all $s \in (0,1)$, which implies that $h_{AB}(s)$ is strictly increasing on $(0,1)$. Consequently, $s = 1$ is the unique solution to the equation $h_{AB}(s) = 0$ in $[0,1]$.
	
	When $B \neq -1$ and $A \geq -2/3$, we similarly observe that $h'_{AB}(1) \geq 0$, from which it follows that $h'_{AB}(s) > 0$ for all $s \in [0,1)$. As a result, the equation $h_{AB}(s) = 0$ has exactly one root in the interval $(0,1)$. 
	
	When $B \neq -1$ and $-1 < A < -2/3$, we have $h'_{AB}(1) < 0$ and $h'_{AB}(0) = 4AB + 4 > 0$. By the Intermediate Value Theorem, $h'_{AB}(s)$ admits at least one root in the interval $(0,1)$. Since $h_{AB}(0) < 0$ and $h_{AB}(1) > 0$, we deduce that $h_{AB}(s)$ crosses the $s$-axis, increases to a maximum, and then decreases to $h_{AB}(1)$ without intersecting the $s$-axis a second time. Consequently, the equation $h_{AB}(s) = 0$ has exactly one root in $(0,1)$.

	(3) When $-A \leq B \leq -A/2 < 0 < A \leq 1$, we have $h_{AB}(0) = A + 2B \leq 0$ and $h_{AB}(-1) = 2(A^2 - 3A + 2)(B - 1) \leq 0$. The function $h_{AB}(s)$ can be rewritten as
	\begin{align*}
		h_{AB}(s) &= B\left(A^2\left(s^4 + s^2\right) + 2As\left(s^2 + 2\right) + 2s^2 + 2\right) + 2A^2s^3 + 5As^2 + A + 4s.
	\end{align*}
	For $s \in (-1,0)$, we first note that $-s(s^2 + 2)/(s^4 + s^2) > {3}/{2}$. This implies that
	\[
	A^2\left(s^4 + s^2\right) + 2As\left(s^2 + 2\right) + 2s^2 + 2 \geq (s+1)^2\left(s^2 + 2\right) > 0
	\]
	for all $A \in [0,1]$. Consequently, we get
	\begin{align*}
		h_{AB}(s) &\leq -\frac{A}{2}\left(A^2\left(s^4 + s^2\right) + 2As\left(s^2 + 2\right) + 2s^2 + 2\right) + 2A^2s^3 + 5As^2 + A + 4s \\
		&= -\frac{1}{2}s\left(A^3\left(s^3 + s\right) - 2A^2\left(s^2 - 2\right) - 8As - 8\right).
	\end{align*}
	Moreover, for all $A \in [0,1]$ and $s \in (-1,0)$, we have
	\begin{align*}
		\frac{\partial}{\partial A}\left[A^3\left(s^3 + s\right) - 2A^2\left(s^2 - 2\right) - 8As - 8\right] = 3A^2\left(s^3 + s\right) - 4A\left(s^2 - 2\right) - 8s \geq 0.
	\end{align*}
	This implies that
	\[
	A^3\left(s^3 + s\right) - 2A^2\left(s^2 - 2\right) - 8As - 8 \leq -(s-1)^2(s+4) \leq 0.
	\]
	Consequently, $h_{AB}(s) < 0$ for all $s \in (-1,0)$, and the equation $h_{AB}(s) = 0$ holds if and only if $s = 0$ when $A + 2B = 0$ or $s = -1$ when $A = 1$.
	
	(4) For $-A/2 \leq B < A \leq 1$, it follows that
	\begin{align*}
		h_{AB}(0) = A + 2B \geq 0 \qquad \text{and} \qquad h_{AB}(-1) = 2(A^2 - 3A + 2)(B - 1) \leq 0.
	\end{align*}
	By the Intermediate Value Theorem, $h_{AB}(s)$ admits at least one root over the interval $[-1,0]$. Further straightforward calculations yield
	\begin{align}
		h_{AB}(s) &= A^2 s^2 \left(B s^2 + B + 2s\right) + 2AB \left(s^2 + 2\right)s + 5As^2 + A + 2\left(B s^2 + B + 2s\right),\nonumber \\
		h_{AB}'(s) &= A^2 s^2 (2Bs + 2) + 2A^2 s \left(B s^2 + B + 2s\right) + 4AB s^2 + 2AB \left(s^2 + 2\right) + 10As + 2(2Bs + 2),\nonumber \\
		h_{AB}''(s) &= 2A^2 B s^2 + 2A^2 \left(B s^2 + B + 2s\right) + 4A^2 s (2Bs + 1) + 12AB s + 10A + 4B,\nonumber \\
		h_{AB}'''(s) &= 12AB(As + 1) + 12A^2(Bs + 1) \geq 0. \label{h3}
	\end{align}
	It is readily verified that $h_{AB}''(0) = 2\left(A^2 B + 5A + 2B\right) > 0$ and $h_{AB}''(-1) = 2\left(7A^2 - 6A + 2\right)B + 2\left(5A - 6A^2\right)$. Consequently, 
	for $0< A\leq \left(\sqrt{65}-3\right)/7$ or  $A\geq \left(\sqrt{65}-3\right)/7$ with $B\geq(6 A^2-5 A)/(7 A^2-6 A+2)$, we have
	\[
	h_{AB}''(-1) \geq 0.
	\]
	Therefore, by using \eqref{h3}, we get $h_{AB}''(s) \geq 0$ for all $s \in [-1,0]$, which implies that $h_{AB}(s)$ is concave on the interval $[-1,0]$.
	
	If $A\neq 1$, since $h_{AB}(-1) < 0$ and $h_{AB}(0) \geq 0$, by using the convexity, we deduce that $h_{AB}(s)$ has exactly one root in $(-1,0]$ (denoted by $s_2$) for the both cases. In particular, if $A + 2B = 0$, then $s_2 = 0$. 
	
	If $A=1$ with $B\geq 1/3$, we have $h_{AB}(-1)=0$ and $h_{AB}'(-1) = 2(3A^2 - 5A + 2)(1 - B)=0$. With the application of convexity, we deduce that  $h_{AB}'(s)>0$ for $s\in(-1,0)$ and $h_{AB}(s)>0$ for $s\in(-1,0)$.

	When $A> \left(\sqrt{65}-3\right)/7$ and $B<(6 A^2-5 A)/(7 A^2-6 A+2)$, then $h_{AB}''(-1) < 0$. Combining this inequality with $h_{AB}''(0) \geq 0$, there exists a point $s_* \in (-1,0]$ such that $h_{AB}''(s_*) = 0$. As a result, 
	$h_{AB}'(s)$ is decreasing on $(-1,s_*]$ and increasing on $[s_*,0]$.
	Note that $h_{AB}'(-1) = 2(3A^2 - 5A + 2)(1 - B) \leq 0$ and $h_{AB}'(0) = 4AB + 4 > 0$. Therefore, the equation $h_{AB}'(s) = 0$ possesses exactly one root $s_{\#} \in (-1,0)$. Thus, we deduce that $h_{AB}'(s) \leq 0$ for all $s \in (-1,s_{\#})$ and $h_{AB}'(s) > 0$ for all $s \in (s_{\#},0)$. 
	
	If $A\neq 1$, given $h_{AB}(-1)<0$ and $h_{AB}(0) \geq 0$, one can further deduce that $h_{AB}(s) = 0$ has exactly one root in $(-1,0]$.
	
	If $A=1$ and $B<1/3$, we have $h_{AB}(-1) = 0$ and $h_{AB}(0) \geq 0$. Therefore, we derive that $h_{AB}(s) = 0$ admits two roots within $[-1,0]$ in this scenario. 
	
\end{proof}
\begin{lemma}\label{lem:Case3}
	For \( -A < B < 0 < A < 1 \), define
	\begin{align*}
		f(s,A) = &
		- A^2 \left( B^4 s^6 - 3B^2 \left( 5B^2 - 4 \right) s^4 + 8B \left( B^2 - 1 \right) s^3 + 3s^2 - 1 \right) \\
		&+ A \left( -B^5 s^6 - 17B^4 s^3 + 3B^3 \left( s^4 + s^2 \right) + B^2 \left( 17s^2 - 3 \right) s - 6Bs^2 + B + 3s \right) \\
		&+ 2 \left( B^2 - 1 \right) \left( 3B^2 s^2 + 1 \right)+A^3 \left( B^2 - 1 \right) s^3 (Bs - 1)^3.
	\end{align*}
	We claim that \( f(s,A) < 0 \) for all \( s \in (0, 1) \).
\end{lemma}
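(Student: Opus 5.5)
This is a plan for showing $f(s,A)<0$ on $(0,1)$ by treating $f$ as a cubic polynomial in $A$, with $s$ and $B$ fixed and $-A<B<0<A<1$.

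\textbf{Step 1: write $f$ as a cubic in $A$.} Set
\[
f(s,A)=c_3A^3+c_2A^2+c_1A+c_0,
\]
where
\begin{align*}
c_3&=(B^2-1)s^3(Bs-1)^3,\\
c_0&=2(B^2-1)(3B^2s^2+1).
\end{align*}
For $B\in(-1,0)$ and $s\in(0,1)$ we have $Bs-1<0$ and $B^2-1<0$. Hence $c_3>0$ and $c_0<0$. In particular $f(s,0)=c_0<0$, so the claim holds at the left end $A=0$.

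\textbf{Step 2: reduce to the edges of the parameter region.} The admissible set is $A\in(|B|,1)$ with $B\in(-1,0)$, i.e.\ the triangle $0<-B<A<1$. I would first try to show that $f$ is convex in $A$ on $[0,1]$. Since $c_3>0$, it suffices to check
\[
\partial_A^2 f=6c_3A+2c_2\ge 0 \quad\text{at } A=0,
\]
that is, $c_2\ge 0$, where
\[
c_2=-\bigl(B^4s^6-3B^2(5B^2-4)s^4+8B(B^2-1)s^3+3s^2-1\bigr).
\]
If $c_2\ge 0$ holds, then $f(s,A)<\max\{f(s,-B),f(s,1)\}$ on the admissible range. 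The claim then reduces to two one-parameter inequalities:
\[
f(s,1)\le 0 \quad\text{and}\quad f(s,-B)\le 0 .
\]
If $c_2$ can be negative for some $(s,B)$, I would fall back on one of two alternatives. One is to show $\partial_A f<0$ near $A=1$ directly. The other is to bound $A^3\le A^2\le A$ termwise according to the signs of the coefficients, giving an upper bound for $f$ that is quadratic in $A$, and handle that quadratic.

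\textbf{Step 3: check the two edge polynomials.} Each edge value is a polynomial in $(s,B)$ on $(0,1)\times(-1,0)$, and its sign must be established.

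For $A=1$, I expect the polynomial to factor with a factor $(B+1)$ or $(1-s)$, by analogy with $h_{AB}(1)$ in Lemma~\ref{lem:hab}. After dividing out that factor, I would substitute $B=-b$ with $b\in(0,1)$ and aim to show that the remaining polynomial in $s,b$ has all terms of one sign. Where that fails, I would group terms as in the proof of Lemma~\ref{lem:hab}(1), pulling out $(1-s)$ and completing squares.

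For $A=-B=b$, the polynomial $f(s,b)$ depends only on $(s,b)$. I would check its values at the four corners of the square: at $b\to 0$ it tends to $-2$, and I would also compute it at $s\to 0$ and at $s\to1$. I would then prove negativity by monotonicity in $b$ or by an explicit sum-of-squares type decomposition.

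\textbf{Main obstacle.} I expect the hardest part to be the sign of the $A=1$ edge polynomial, together with the convexity condition $c_2\ge0$. Both are degree-six polynomials in $s$ whose coefficients change sign. For these I would first try writing each expression as a combination of manifestly signed pieces in $(1-s)$, $(1+B)$ and $s$. If that fails, I would split the $s$-interval and use crude bounds $s^k\le s^2$ on the higher-degree terms.
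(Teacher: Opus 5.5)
\textbf{The gap is in Step 2.} The convexity reduction fails, and neither fallback repairs it.

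First, the condition $c_2\ge 0$ is false near $s=1$. At $s=1$ one has $c_2=-2(1-B^2)(7B^2-4B+1)<0$ for every $B\in(-1,0)$. Restricting to the admissible range $A\in[-B,1]$ does not restore convexity either. At $(s,B,A)=(1,-\frac{1}{2},\frac{1}{2})$ you get $\partial_A^2 f=6c_3A+2c_2=\frac{243}{32}-\frac{57}{4}=-\frac{213}{32}$. By continuity, $f(s,\cdot)$ is strictly concave near $A=-B$ for $s$ close to $1$, so $f<\max\{f(s,-B),f(s,1)\}$ has no justification.

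The real difficulty is the corner $(s,A)=(1,1)$, where $f(1,1)=0$ and there is no slack.
\begin{itemize}
\item At that corner $\partial_A f(1,1)=2(1-B)^2(1+B)^3>0$. So your first fallback, proving $\partial_A f<0$ near $A=1$, asks for something false when $s$ is near $1$.
\item Your second fallback replaces $c_3A^3$ by $c_3A^2$. This keeps the value at $A=1$ but lowers the slope there by $c_3>0$. At $s=1$, $B=-\frac{1}{2}$, the relaxed quadratic $q(A)=(c_3+c_2)A^2+c_1A+c_0$ has $q(1)=0$ and $q'(1)=-\frac{63}{32}$. It is therefore positive just below $A=1$, and by continuity also for $s$ slightly less than $1$.
\end{itemize}
The two edge inequalities in your Step 3 are true, but they are not what is missing.

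\textbf{How the paper handles the corner, and a shorter route.} The paper first proves $\partial_s f>0$ on the whole region $-A<B<0<A<1$, so that $f(s,A)<f(1,A)$. It then shows that $f(1,\cdot)$ is increasing in $A$ with $f(1,1)=0$.

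A much shorter argument uses where $f$ comes from. In the proof of Theorem~\ref{thm:AB} one has
\[
f=\frac{(As-1)^3(Bs-1)^3(Bs+1)^3}{2(A-B)}F_3'',
\]
and $F_3''$ has exactly the partial-fraction form written in the equation defining $s_3$. Clearing denominators gives the identity
\[
f(s,A)=-\frac{A(1-A^2)(1-B^2s^2)^3+|B|(1-B^2)(1-As)^3(1+Bs)^3}{A-B}-(1-B^2)(1-As)^3(1-Bs)^3.
\]
Every term here is negative for $0<A<1$, $-1<B<0$, $0<s<1$. This proves the lemma at once, and it does not even need the hypothesis $-A<B$.
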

\begin{proof}
	Taking the derivative of $f(s,A)$ with respect to $s$, we have
	\begin{align*}
		\frac{1}{3}\frac{\partial f(s,A)}{\partial s} &= 3 4B^2 ( B^2 - 1 ) s 
		- A \left( 2B^5 s^5 + 17B^4 s^2 - 2B^3 ( 2s^3 + s ) + B^2 ( 1 - 17s^2 ) + 4Bs - 1 \right) \\ 
		&- 2A^2 s ( B^4 s^2 ( s^2 - 10 ) + 4B^3 s + 8B^2 s^2 - 4Bs + 1 )+A^3 ( B^2 - 1 ) s^2 (Bs - 1)^2 (2Bs - 1)  \\
		&=: g(s, A).
	\end{align*}
	We claim that \( g(s, A) > 0 \) whenever \( -A < B < 0 < A < 1 \) and \( s \in (0, 1) \).  
	Accordingly, for any fixed $A\in(0,1)$, \( f(s,A) \) is increasing on \((0, 1) \), which implies $f(s,A)\leq f(1,A)$ for all $s\in(0,1)$. By direct computation, we obtain
	\begin{align*}
		f(1,A) &= \left( B^2 - 1 \right) \Big( A^3 (B - 1)^3 + 2A^2 \left( 7B^2 - 4B + 1 \right) - A \left( B^3 + 17B^2 - 5B + 3 \right) + 6B^2 + 2 \Big).
	\end{align*}
	Differentiating $f(1,A)$ with respect to $A$, we get 
	\begin{align*}
		\frac{\partial}{\partial A}f(1,A)=(-1 + B^2)\left(   3 A^2 (-1 + B)^3 + 
		4 A (1 - 4 B + 7 B^2)-3 + + 5 B - 17 B^2 - B^3\right) .
	\end{align*}
	The second bracket on the right-hand side  is a quadratic function in $A$ and is negative for $B\in(-A,0)$. Since $(B-1)^3<0$, and the discriminant satisfies
	\begin{align*}
		\Delta=&4 (1 + B)^2 (-5 + 20 B - 38 B^2 + 36 B^3 + 3 B^4)<0,
	\end{align*}
	we conclude that $f(1,A)$ is an increasing function of $A$ on $(0,1]$. Consequently, which yields $f(1,A)\leq f(1,1)=0$ holds for all $A\in(0,1]$. We get the conclusion.
	
	We now revisit the foregoing statement. Differentiating $g(s,A)$ twice with respect to $A$, we obtain
	\[
	\frac{\partial^2 g(s, A)}{\partial A^2} 
	= 6A ( B^2 - 1 ) s^2 (Bs - 1)^2 (2Bs - 1) 
	- 4s \left( B^4 s^2 ( s^2 - 10 ) + 4B^3 s + 8B^2 s^2 - 4Bs + 1 \right),
	\]	
	One can readily verify that
	\[
	\left( B^2 - 1 \right) s^2 (Bs - 1)^2 (2Bs - 1) > 0
	\]	
	for \( B \in (-1, 0) \) and \( s \in (0, 1) \). Thus, for fixed $A,B$, \(\partial^2 g(s,A)/\partial A^2\) is strictly increasing with respect to $A$.  
	For $B \in (-1, 0)$, there exists some $s \in (0, 1)$ such that
	\[
	A_0 = A_0(B, s) = \frac{2\bigl(B^4s^4 - 10B^4s^2 + 4B^3s + 8B^2s^2 - 4Bs + 1\bigr)}{3(B^2 - 1)s(Bs - 1)^2(2Bs - 1)},
	\]
	and $\left.\partial^2 g(s,A)/\partial A^2\right|_{A=A_0} = 0$. In addition, $\partial^2 g(s,A)/\partial A^2 < 0$ for all $A \in (0, A_0)$ and $\partial^2 g(s,A)/\partial A^2 > 0$ for all $A \in (A_0, 1)$.
	
	Moreover, the partial derivative of $A_0(B,s)$ with respect to $s$ satisfies
	\begin{align*}
		\frac{\partial A_0(B, s)}{\partial s} 
		&= \frac{2\bigl( -1 + 7 B s + 2 (-8 + 3 B^2) B^2 s^2 - (-32 + 34 B^2) B^3 s^3 
			+ (-29 + 40 B^2) B^4 s^4 - 5 B^5 s^5\bigr)}{3(B^2 - 1)s^2(1 - 2Bs)^2(Bs - 1)^3}\\
		&\leq\frac{2(-1 - B s) (1 - B s)^3 (1 - 5 B s)}{3(B^2 - 1)s^2(1 - 2Bs)^2(Bs - 1)^3} \leq 0,
	\end{align*}
	since $B^2<1$. 
	Meanwhile, $A_0\!\left(0,2/{3}\right) = 1$, we define
	\begin{align*}
		A_0\!\left(B, \frac{2}{3}\right) 
		= \frac{81 - 216 B + 288 B^2 + 216 B^3 - 344 B^4}{3 (3 - 2 B)^2 (-3 + 4 B) (-1 + B^2)}.
	\end{align*}
	Differentiating $A_0\!\left(B, 2/{3}\right) $ with respect to $B$, we obtain
	\begin{align*}
		\frac{\partial}{\partial B}A_0\!\left(B, \frac{2}{3}\right) 
		&=\frac{ 2B \bigl(-1701 + 486 B + 4428 B^2 - 372 B^3 - 4528 B^4 + 336 B^5 + 
			1376 B^6\bigr)}{3 (3 - 4 B)^2 (-3 + 2 B)^3 (-1 + B^2)^2 }\\
		&=\frac{2B}{3 (3 - 4 B)^2 (-3 + 2 B)^3 (-1 + B^2)^2}
		\Bigl(  -875 B^6 + 3200 (B+1) B^5 - 7219 (B+1)^2 B^4 \\
		&\qquad + 20796 (B+1)^3 B^3 - 23517 (B+1)^4 B^2 + 10692 (1+B)^5 B 
		- 1701 (1+B)^6\Bigr).
	\end{align*}
	Since $B \in (-1,0)$, we have $\partial A_0(B,2/3)/\partial B < 0$. Thus $A_0(B,2/3)$ is strictly decreasing in $B$ on $(-1,0)$ and
	\[
	A_0\!\left(B,\frac{2}{3}\right) \geq A_0\!\left(0,\frac{2}{3}\right) = 1.
	\]
	Consequently, we conclude $A_0 > 1$ for all $s \in (0,2/3)$.
	This implies $\partial^2 g(s, A)/\partial A^2 < 0$ for all $s \in (0, 2/3)$ and $B \in (-1, 0)$.
	
	It is straightforward to verify that
	\begin{align}
		g(s, -B)&= -B(B^2 - 1)(Bs + 1)^2(2B^3s^3 - 9B^2s^2 - 1) > 0,   \label{g(-B)}\\
		g(s,1) 
		&= (1+B)(s-1)^2 \Big[ (1 - B)(1 - s)^3 + (3 - 7B - 2B^2 + 4B^3)s(1 - s)^2 \nonumber\\
		&\qquad + (3 - 11B + B^2 - B^3)s^2(1 - s) + (1 - 5B + 3B^2 - 7B^3)s^3 \Big] > 0  \label{g(1)}
	\end{align}
	for all \(B\in(-1,0)\) and \( s \in (0, 1) \).
	Thus, for any fixed $s \in (0, 2/3)$, using the concavity of $g(s,A)$, we conclude that
	$g(s, A) > 0$ for all $s \in (0, 2/3)$, $-B < A < 1$ and $B \in (-1, 0)$.
	
	Taking the derivative of $g(s,A)$ with respect to $A$, we obtain
	\begin{align*}
		\frac{\partial g(s, A)}{\partial A} &= 3A^2 (B^2 - 1) s^2 (Bs - 1)^2 (2Bs - 1)  - 4As \left( B^4 s^2 (s^2 - 10) + 4B^3 s + 8B^2 s^2 - 4Bs + 1 \right) \\
		&\quad - 2B^5 s^5 - 17B^4 s^2 + 2B^3 (2s^3 + s) + B^2 (17s^2 - 1) - 4Bs + 1 .
	\end{align*}
	Fix $s \in (0,1)$ and $B \in (-A,0)$ with $A > 0$. Since $3(B^2-1)s^2(Bs-1)^2(2Bs-1) > 0$, it follows that $\partial g(s,A)/\partial A$ is convex in $A$.
	
	Direct computation yields
	\begin{align*}
		\left.\frac{\partial g(s, A)}{\partial A}\right|_{A=1} &=
		4B^5 s^5 - B^4 \left( 4s^3 + 15s^2 - 40s + 17 \right) s^2 - 2B^3 \left( 3s^4 - 8s^2 + 8s - 1 \right) s \\
		&+ B^2 \left( 15s^4 - 32s^3 + 14s^2 - 1 \right)- 4B \left( 3s^2 - 4s + 1 \right) s + 3s^2 - 4s + 1.
	\end{align*} 
	For $t>0$ and $u>0$, we define
	\begin{align*}
		s=\frac{t+2}{t+3},   \qquad B=-\frac{u}{u+1}.
	\end{align*}
	One can readily verify that $s>2/3$ and $-1<B<0$.
	Then we get
	\begin{align*}
		\left.\frac{\partial g(s, A)}{\partial A}\right|_{A=1} =-\frac{N(t,u)}{(3 + t)^5 (1 + u)^5},
	\end{align*}
	where
	\begin{align*}
		N(t, u) = & (t+3)^3 (2t+3) + u(t+3)^2 (2t+3) (9t+23) + u^2 (t+3) (4t^4 + 92t^3 + 494t^2 + 988t + 663) \\
		& + u^3 (8t^5 + 176t^4 + 1190t^3 + 3566t^2 + 4959t + 2607)  + 2u^4 (3t+5) (8t^3 + 58t^2 + 140t + 113) \\
		& + 2u^5 (t+2) (2t+5)^2.
	\end{align*}
	Since $N(t, u) >0$ for all $t>0$ and $u>0$, we conclude that $\left.\partial g(s, A)/\partial A\right|_{A=1}  < 0$ for all $s \in (2/3, 1)$ and $B \in (-1, 0)$. 
	
	Therefore,
	For any fixed $s \in (2/3,1)$ and $B \in (-1,0)$, by exploiting the convexity of $\partial g(s, A)/\partial A$, we deduce that $\partial g(s,A)/\partial A$ has at most one zero in $(-B,1)$.
	
	Furthermore, if $\partial g(s,A)/\partial A$ has no zero in $(-B,1)$, then by convexity it follows that $\partial g(s,A)/\partial A < 0$ for all $A \in (-B,1)$. Consequently, $g(s,A)$ is strictly decreasing in $A$ on $(-B,1)$.
	In view of \eqref{g(1)}, we conclude that $g(s, A) > 0$ for all $s \in (2/3, 1)$, $A \in (-B, 1)$, and $B \in (-1, 0)$.	
	
	If $g_A(s,A)$ has a unique zero $A_1 \in (-B,1)$ for fixed $s \in (2/3,1)$ and $B \in (-1,0)$, then  $\partial g(s,A)/\partial A < 0$ for all $A\in(-B,A_1)$ and $\partial g(s,A)/\partial A > 0$ for all $A\in (A_1,1)$, that is, $g(s,A)$ is increasing in $A$ on $(-B,A_1)$ and decreasing in $A$ on $(A_1,1)$. Since $g(s,-B) > 0$, $g(s,1) > 0$, we conclude that $g(s,A) > 0$ for all $s \in (2/3,1)$, $B \in (-1,0)$, and $A \in (-B,1)$.
\end{proof}

\begin{theorem}\label{thm:AB}
	If \(f \in \mathscr{S}^*(A,B)\)
	for \(-1 \leq B < A \leq 1\), then the following inequality holds:
	\begin{align*}
		\|P_f\| \leq
		\begin{cases}
			-\dfrac{ (A-B)\left(s_1^2-1\right) (A s_1+2)}{(A s_1+1) (B s_1+1)}, & \text{for } |B|>|A|, \\
			\dfrac{(A-B)\left((s'_2)^2-1\right)  (A s'_2-2)}{(A s'_2-1) (B s'_2-1)},  & \text{for } 0<B<A\leq1, \\
			\dfrac{ (A-B)\left(1 - s_3^2\right) \left(2 + A s_3 (-1 + B s_3)\right)}{(-1 + A s_3) (-1 + B s_3) (1 + B s_3)},& \text{for }-A\leq B<0<A\leq1.
		\end{cases}
	\end{align*}
	Here, \(s_1\) and \(s_2' = -s_2\) denote respectively the roots of \(h_{AB}(s)\) (defined in \eqref{hab}) lying in \([0,1]\).
	Moreover, \(s_3\) is the unique root in \([0,1]\) of the equation
	\begin{align*}
		-\frac{2 A \left(A^2-1\right)}{(A-B) (A s-1)^3}
		+\frac{2 B \left(B^2-1\right)}{(A-B) (B s-1)^3}
		+\frac{2 \left(B^2-1\right)}{(B s+1)^3}=0.
	\end{align*}
	In particular, if \(B=-1\), then \(s_1=1\), which yields \(\|P_f\| \leq 2(A+2);\) if \(A+2B=0\), then \(s_2=0\), so \(\|P_f\| \leq 2(A-B)\).
\end{theorem}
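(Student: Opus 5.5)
The plan is to specialize the general framework of Section~2 to $\varphi(z)=(1+Az)/(1+Bz)$. In this case
\[
P(z)=\frac{\varphi'(z)}{\varphi(z)}=\frac{A-B}{(1+Az)(1+Bz)},\qquad Q(z)=\frac{(A-B)z}{1+Bz}.
\]
Theorem~\ref{thm:main} reduces the problem to maximizing $F(s,t)$ in \eqref{Fst}. To do this I first need explicit formulas for $\sup_{|w|=s}|P(w)|$ and $\sup_{|w|=s}|Q(w)|$. The second is elementary: $\sup_{|w|=s}|Q(w)|=(A-B)s/(1-|B|s)$. The first equals $(A-B)\big/\min_{|w|=s}|1+Aw|\,|1+Bw|$, and the minimizer depends on the signs of $A$ and $B$. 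This forces the three-way split in the statement.

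\textbf{Case $|B|>|A|$ (i.e.\ $B<0$, $|A|\le -B$).} Here I expect $\min|1+Aw||1+Bw|$ on $|w|=s$ to be attained at $w=s$ when $A\ge 0$, and when $A\le0$ as well, since the $B$-factor dominates. Writing $w=se^{i\theta}$, the product squared is a quadratic in $\cos\theta$, so this can be checked directly. With that in hand, $P$ and $Q$ effectively behave like the nonnegative-coefficient case. So
\[
F(s,t)=(1-s^2)\frac{A-B}{(1+As)(1+Bs)}+\frac{1-t^2}{t}\cdot\frac{(A-B)s}{1+Bs}.
\]
This is decreasing in $t$, as in Theorem~\ref{Thm:nonnegativephi}, so the problem reduces to $G(s)=F(s,s)=(A-B)(1-s^2)(As+2)/((1+As)(1+Bs))$. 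Differentiating, $G'(s)$ has numerator proportional to $-h_{AB}(s)$. Lemma~\ref{lem:hab}(1),(2) then gives a unique critical point $s_1$ where $G$ switches from increasing to decreasing, so $\sup G=G(s_1)$. This is the first formula, and $B=-1$ gives $s_1=1$ with the limit value $2(A+2)$.

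\textbf{Case $0<B<A$.} Now the minimum of $|1+Aw||1+Bw|$ sits at $w=-s$, and $\sup|Q|=(A-B)s/(1-Bs)$. This is the alternating situation of Theorem~\ref{Thm:oddandeven}, where again $\partial_tF\le0$. So I maximize
\[
G(s)=\frac{(A-B)(1-s^2)(2-As)}{(1-As)(1-Bs)},
\]
whose derivative is governed by $h_{AB}(-s)$. Lemma~\ref{lem:hab}(4) supplies the root $s_2'=-s_2$ and the sign change, giving the second formula; for $A+2B=0$ the value is $2(A-B)$.

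\textbf{Mixed case $-A\le B<0<A$.} Here the two suprema are attained at different points: $|1+Aw|$ is minimized at $w=-s$ and $|1+Bw|$ at $w=s$, while $\sup|Q|$ occurs at $w=s$. I would first bound $\sup|P|\le (A-B)/((1-As)(1+Bs))$ at $w=-s$, after checking that the product minimum is at $w=-s$ since $|A|\ge|B|$. The $Q$-term is still decreasing in $t$, so I set $t=s$ and maximize
\[
G(s)=(1-s^2)\left[\frac{A-B}{(1-As)(1+Bs)}+\frac{A-B}{1+Bs}\right].
\]
Using partial fractions in $A$ and $B$, the critical-point equation $G'(s)=0$ becomes the displayed cubic-denominator equation defining $s_3$. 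Uniqueness of $s_3$ should come from Lemma~\ref{lem:Case3}: that polynomial $f(s,A)$ should be, up to a positive factor, the derivative of the critical-point function, and its negativity gives monotonicity, hence a unique root and a unique maximum.

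\textbf{Main obstacle.} I expect the hardest step to be justifying sharpness and the location of the minimum in this mixed case. Because $P$ and $Q$ peak at opposite points, equality in Theorem~\ref{thm:main} via $\omega(z)=z$ is not immediate. I would instead try a rotation $\omega(z)=-z$, or the Dieudonn\'e extremal \eqref{eq:extremal}, combined with checking that the bound actually obtained matches the stated expression.
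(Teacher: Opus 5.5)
Your cases $|B|>|A|$ and $0<B<A$ agree with the paper's proof. The paper reads the suprema off the signs of the Maclaurin coefficients $(-1)^n(A^{n+1}-B^{n+1})$ and $(A-B)(-1)^{n-1}B^{n-1}$ rather than minimizing $|1+Aw|\,|1+Bw|$ directly, but the resulting functions and the use of Lemma~\ref{lem:hab} are the same.

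The mixed case $-A\le B<0<A$, however, goes wrong at its first step. At $w=-s$ one has $|1+Aw|\,|1+Bw|=(1-As)(1-Bs)$, not $(1-As)(1+Bs)$. The quantity $(1-As)(1+Bs)$ multiplies the minima of the two factors, which occur at the opposite points $w=-s$ and $w=s$, so it is only a lossy bound. The quantity $|1+Aw|^2|1+Bw|^2$ is a concave quadratic in $\cos\theta$ (leading coefficient $4ABs^2<0$). Its endpoint values satisfy $(1+As)(1+Bs)-(1-As)(1-Bs)=2(A+B)s\ge 0$. Hence the correct value is $\sup_{|w|=s}|P(w)|=P(-s)=(A-B)/((1-As)(1-Bs))$, which is what the paper uses. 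The function to be maximized is therefore
\[
F_3(s)=\frac{(A-B)(1-s^2)(2-As+ABs^2)}{(1-As)(1-Bs)(1+Bs)}.
\]
Your $G$ satisfies $G(s)>F_3(s)$ for all $s\in(0,1)$; for instance, with $A=1/2$, $B=-1/4$ one gets $G(1/2)=1.5$ but $F_3(1/2)\approx 1.31$. Maximizing $G$ thus gives a strictly weaker bound than the third formula of the statement, and Lemma~\ref{lem:Case3} says nothing about $G$.

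Your plan to obtain the displayed equation as the critical-point equation would fail even with the correct function. By partial fractions,
\[
F_3'(s)=\frac{A^2-1}{(As-1)^2}-\frac{B^2-1}{(Bs-1)^2}-\frac{(A-B)(Bs^2+B+2s)}{(Bs+1)^2}.
\]
Differentiating once more gives exactly $(A-B)$ times the left-hand side of the displayed equation, so as written that equation is $F_3''(s)/(A-B)=0$. The paper's proof uses Lemma~\ref{lem:Case3} precisely to show $F_3''(s)=2(A-B)f(s,A)/[(-1+As)^3(-1+Bs)^3(1+Bs)^3]<0$, since the denominator is positive. So the displayed equation has no root in $(0,1)$.

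What the proof actually uses is that $F_3'$ is strictly decreasing, with $F_3'(0)=A(A-B)>0$ and $F_3'(1)<0$. Hence $s_3$ is the unique zero of $F_3'$, and $\sup F_3=F_3(s_3)$. Your reading of $f(s,A)$ as a positive multiple of the derivative of the critical-point function is right, but only for $F_3$. Finally, the statement asserts only the inequality, so the sharpness question you single out as the main obstacle does not need to be settled.
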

\begin{proof}
	Let \(\varphi_{A,B}(z)=(1+Az)/(1+Bz)\) for \(-1\leq B<A\leq 1\). We have
	\begin{align*}
		P_ {A,B}(z)&=\frac{\varphi'_{A,B}(z)}{\varphi_{A,B}(z)}=\frac{A-B}{(A z+1) (B z+1)}
		=\sum_{n=0}^\infty(-1)^n(A^{n+1}-B^{n+1})z^n,\\
		Q_{A,B}(z)&=\varphi_{A,B}(z)-1=\frac{(A-B) z}{ (B z+1)}=(A-B)\sum_{n=1}^\infty(-1)^{n-1} B^{n-1}z^n.
	\end{align*}
	\textbf{Case 1:} When 
\(|B|>|A|\), i.e. \(-1\leq B<A\leq 0\) or \(-1\leq B\leq-A<0<A\leq1\), we have $(-1)^n(A^{n+1}- B^{n+1}) > 0$ and $(-1)^{n-1} B^{n-1} \geq 0$. That is, the functions $Q_{A,B}$ and $P_{A,B}$ have non-negative Maclaurin coefficients. Thus, we derive that $\displaystyle\sup_{|z|=s}|P_{A,B}(z)|=P_{A,B}(s)$ and $\displaystyle\sup_{|z|=s}|Q_{A,B}(z)|=Q_{A,B}(s)$. 
	We know from Theorem \ref{Thm:nonnegativephi} that
	\begin{align*}
		||P_f||\leq\sup_{0<s<1}\frac{1-s^2}{s}\left(\frac{s\varphi'_{A,B}(s)}{\varphi_{A,B}(s)} +\varphi_{A,B}(s)-1\right) =\sup_{0 < s < 1}\frac{(1-s^2)(A-B)(A s + 2)}{(A s+ 1)(B s + 1)}.
	\end{align*}
	Define  
	\begin{align*}
		F_1(s)=\frac{(1-s^2)(A-B)(A s + 2)}{(A s+ 1)(B s + 1)}.
	\end{align*}
	Differentiating $F_1(s)$ with respect to $s$, we obtain
	\begin{align*}
		F'_1(s)=\frac{(B-A)h_{AB}(s)}{(As+1)^2 (Bs+1)^2},
	\end{align*}
	and
	\begin{align*}
		h_{AB}(s)=A^2 B s^4 + s^3 \left(2A^2 + 2AB\right) + s^2 \left(A^2 B + 5A + 2B\right) + s (4AB + 4) + A + 2B.
	\end{align*}
	By items (1) and (2) of Lemma \ref{lem:hab}, we conclude that the equation $F'_1(s) = 0$ has exactly one root in the interval $[0,1]$, denoted by $s_1$. The function $F_1(s)$ is increasing on $(0,s_1)$ and decreasing on $(s_1,1)$. Consequently,
	\begin{align*}
		\|P_f\| \leq \frac{(1-s_1^2)(A-B)(A s_1 + 2)}{(A s_1 + 1)(B s_1 + 1)}.
	\end{align*}
	In particular, when $B = -1$, we have $s_1 = 1$, and
	\begin{align*}
		\|P_f\| \leq \sup_{0 < s < 1} \frac{(A+1)(s+1)(A s + 2)}{A s + 1} = 2(A + 2).
	\end{align*}
	\textbf{Case 2:} When $0\leq B<A\leq 1$, we have \(A^{n+1}-B^{n+1}>0\). Consequently, we derive that the functions $-Q_{A,B}$ and $P_{A,B}$ possess non-positive odd-order Maclaurin coefficients and non-negative even-order Maclaurin coefficients.Then we have 
	\begin{align}\label{-QAB}
		\sup_{|z|=s}|P_{A,B}(z)|&
		=\sum_{n=0}^\infty(-1)^n(A^{n+1}-B^{n+1})(-s)^n
		=P_{A,B}(-s),\\
		\sup_{|z|=s}|Q_{A,B}(z)|&
		=(A-B)\sum_{n=1}^\infty(-1)^n B^{n-1}(-s)^n
		=-Q_{A,B}(-s).
	\end{align} 
	Let $\phi_{A,B}(s)=\varphi_{A,B}(-s)=(1-As)/(1-Bs)$.
	Thus, by Theorem \ref{Thm:oddandeven}, we have
	\begin{align*}
		\|P_f\|\leq \sup_{0<s<1}\frac{1-s^2}{s}\left(-\frac{s\phi_{A,B}'(s)}{\phi_{A,B}(s)}-\phi_{A,B}(-s)+1\right)=\sup_{0<s<1}\frac{\left(s^2-1\right) (A-B) (A s-2)}{(A s-1) (B s-1)}.
	\end{align*}
	Define 
	\begin{align*}
		F_2(s)=\frac{\left(s^2-1\right) (A-B) (A s-2)}{(A s-1) (B s-1)}.
	\end{align*}
	Taking derivative of $F_2(s)$ with respect to $s$, we have
	\begin{align*}
		F_2'(s)=\frac{(A-B)h_{AB}(-s)}{(A s-1)^2 (B s-1)^2},\end{align*}
	where
	\begin{align*}
		h_{AB}(s)=A^2 B s^4+s^3 \left(2 A^2+2 A B\right)+s^2 \left(A^2 B+5 A+2 B\right)+s (4 A B+4)+A+2 B.
	\end{align*}
	For  $0 \leq B < A \leq 1$, item (4) of Lemma \ref{lem:hab} implies that the equation $F_2'(s) = 0$ has exactly one root in the interval $[0,1)$, denoted by $s_2' = -s_2$. The function $F_2(s)$ is increasing on $(0,s_2')$ and decreasing on $(s_2',1)$, thus
	\begin{align*}
		\|P_f\|\leq\sup_{0<s<1}\frac{\left(s^2-1\right) (A-B) (A s-2)}{(A s-1) (B s-1)}= \frac{(A-B)\left((s'_2)^2-1\right)  (A s'_2-2)}{(A s'_2-1) (B s'_2-1)}.
	\end{align*}{\bf Case 3:} When $-A \leq B  \leq 0 < A \leq 1$, we have $A^{n+1} - B^{n+1} > 0$. The function $P_{A,B}(z)$ possesses non-positive odd-order Maclaurin coefficients and non-negative even-order Maclaurin coefficients. While The function $Q_{A,B}$ have non-negative Maclaurin coefficients. Thus we get
	\begin{align*}
		&\sup_{|z|=s}|P_{A,B}(z)|
		=P_{A,B}(-s)=\frac{A-B}{(1-A s) (1-B s)},\\
\text{and}\quad  &\sup_{|z|=s}|Q_{A,B}(z)|=Q_{A,B}(s)=\frac{(A-B) s}{ B s+1}.
	\end{align*}
	Therefore, we obtain
	\begin{align*}
		||P_f||&\leq\sup_{0 < s < 1} (A - B) \left( \frac{1 - s^2}{(-1 + A s)(-1 + B s)} + \frac{s - s t^2}{t + B s t} \right).
	\end{align*}
	Define 
	\begin{align*}
		F_3(s,t)=\frac{1 - s^2}{(-1 + A s)(-1 + B s)} + \frac{s - s t^2}{t + B s t} .
	\end{align*}
	Take derivative of $F_3(s,t)$ with respect to $t$, we have
	\begin{align*}
		\frac{\partial F_3(s,t)}{\partial t}=-\frac{ s \left(1 + t^2\right)}{(1 + B s) t^2}<0,
	\end{align*}
	which implies $F(s,t)$ is decreasing for $t$ on $[s,1]$. Therefore, we can conclude that
	\begin{align*}
		||P_f||\leq\sup_{0 < s < 1}\frac{(A-B) \left(1 - s^2\right) \left(2 + A s (-1 + B s)\right)}{(-1 + A s) (-1 + B s) (1 + B s)}.
	\end{align*}
	Setting 
	\begin{align*}
		F_3(s)=\frac{ (A-B)\left(1 - s^2\right) \left(2 + A s (-1 + B s)\right)}{(-1 + A s) (-1 + B s) (1 + B s)},
	\end{align*}
	a straightforward calculation yields
	\begin{align*}
		F''_3(s)&=\frac{2(A-B)f(s,A)}{(-1 + A s)^3 (-1 + B s)^3 (1 + B s)^3},
	\end{align*}
	where 
	\begin{align*}
		f(s,A) = &
		- A^2 \left( B^4 s^6 - 3B^2 \left( 5B^2 - 4 \right) s^4 + 8B \left( B^2 - 1 \right) s^3 + 3s^2 - 1 \right) \\
		&+ A \left( -B^5 s^6 - 17B^4 s^3 + 3B^3 \left( s^4 + s^2 \right) + B^2 \left( 17s^2 - 3 \right) s - 6Bs^2 + B + 3s \right) \\
		&+ 2 \left( B^2 - 1 \right) \left( 3B^2 s^2 + 1 \right)+A^3 \left( B^2 - 1 \right) s^3 (Bs - 1)^3 ,
	\end{align*}
	By Lemma \ref{lem:Case3}, we know that $f(s,A)<0$ for all $s\in(0,1)$ and $-A \leq B \leq 0 < A \leq 1$. That is, $F_3(s)$ is a concave function on $s\in[0,1]$. A direct computation gives
	\[
	F_3'(s) = \frac{A^2-1}{(As-1)^2} - \frac{B^2-1}{(Bs-1)^2} - \frac{(A-B)(Bs^2+B+2s)}{(Bs+1)^2}.
	\]
	Consequently,
	\[
	F_3'(0) = A(A-B) > 0, \qquad F_3'(1) = -\frac{2(A-B)(2+A(B-1))}{(A-1)(B-1)(B+1)} < 0,
	\]
	provided that $-A \leq B \leq 0 < A \leq 1$. Then, we can conclude that $F'_3(s)$ has only one root in $s\in (0,1)$, say $s_3$, and $F_3(s)$ is increasing on $[0,s_3]$ and decreasing on $[s_3,1]$. Hence, it follows that
	\begin{align*}
		||P_f||\leq\frac{ (A-B)\left(1 - s_3^2\right) \left(2 + A s_3 (-1 + B s_3)\right)}{(-1 + A s_3) (-1 + B s_3) (1 + B s_3)}.
	\end{align*} 
	We complete the proof.
\end{proof}
\begin{remark}
	The first and second cases recover the results of Ali and Pal \cite{Ali2023p}, which were obtained by a different method. In their work, the extremal function was shown to be
	\begin{align*}
		K_{A,B}(z) =
		\begin{cases}
			z e^{Az}, & \text{if } B = 0, \\[4pt]
			z(1 + Bz)^{\frac{A}{B} - 1}, & \text{if } B \neq 0.
		\end{cases}
	\end{align*}
\end{remark}

\section{Applications}
By applying Theorems \ref{Thm:nonnegativephi}-\ref{Thm:oddandeven}, we examine pre-Schwarzian norms over various subclasses of Ma–Minda starlike functions.

\begin{corollary} \cite{1998Sugawa}
	For \(f \in \mathscr{S}\mathscr{S}^*(\alpha)\) with \(\alpha\in(0,1]\), we have
	\begin{align*}
		\|P_f\|\leq 2\alpha+\frac{4\alpha x_0^{\alpha+1}}{x_0^2+1},
	\end{align*}
	where \(x_0\in(1,+\infty)\) is the unique root of the equation
	\begin{align*}
		(1-\alpha)x^{\alpha+2}+(1+\alpha)x^\alpha-x^2-1=0.
	\end{align*}
\end{corollary}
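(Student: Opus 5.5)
We apply Theorem \ref{Thm:nonnegativephi} with $\varphi(z)=\left((1+z)/(1-z)\right)^\alpha$, so that $\mathscr{S}^*(\varphi)=\mathscr{S}\mathscr{S}^*(\alpha)$. The problem then reduces to maximizing an explicit one-variable function; the only delicate point is showing its critical point is unique.

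\textbf{Coefficient hypotheses.} We have $P(z)=\varphi'(z)/\varphi(z)=2\alpha/(1-z^2)=2\alpha\sum_{k\ge0} z^{2k}$, whose Maclaurin coefficients are non-negative. For $Q$ we write
\[
\varphi(z)=\exp\bigl(\alpha\log\tfrac{1+z}{1-z}\bigr)=\exp\Bigl(2\alpha\sum_{k\ge0}\tfrac{z^{2k+1}}{2k+1}\Bigr).
\]
This is the exponential of a power series with non-negative coefficients, so it also has non-negative coefficients. Hence $Q=\varphi-1$ has non-negative coefficients, and Theorem \ref{Thm:nonnegativephi} gives
\[
\|P_f\|\le\sup_{0<s<1}\Bigl(2\alpha+\frac{1-s^2}{s}\bigl(\varphi(s)-1\bigr)\Bigr).
\]

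\textbf{Reduction to one variable.} Substitute $s=(x-1)/(x+1)$ with $x\in(1,\infty)$. Then $(1+s)/(1-s)=x$ and $(1-s^2)/s=4x/(x^2-1)$, so the quantity to maximize is
\[
G(x)=2\alpha+\frac{4(x^{\alpha+1}-x)}{x^2-1}.
\]
A direct differentiation gives
\[
G'(x)=-\frac{4N(x)}{(x^2-1)^2},\qquad N(x)=(1-\alpha)x^{\alpha+2}+(1+\alpha)x^\alpha-x^2-1,
\]
so the critical points of $G$ on $(1,\infty)$ are exactly the roots of the equation in the statement.

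At a root $x_0$ we have $(x_0^\alpha-1)(x_0^2+1)=\alpha x_0^\alpha(x_0^2-1)$; expanding both sides shows this is equivalent to $N(x_0)=0$. Therefore
\[
\frac{4(x_0^{\alpha+1}-x_0)}{x_0^2-1}=\frac{4\alpha x_0^{\alpha+1}}{x_0^2+1},
\]
which turns $G(x_0)$ into the stated bound.

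\textbf{Main obstacle: uniqueness of $x_0$.} For $0<\alpha<1$ we must show that $N$ has exactly one zero in $(1,\infty)$, with $N<0$ before it and $N>0$ after it. Then $G$ increases and then decreases, and $\sup G=G(x_0)$. The boundary behaviour is as follows.
\begin{itemize}
\item $N(1)=0$ and $N'(1)=(1-\alpha)(\alpha+2)+\alpha(1+\alpha)-2=0$.
\item $N''(1)=(1-\alpha)(\alpha+2)(\alpha+1)+(1+\alpha)\alpha(\alpha-1)-2=-2\alpha(1-\alpha)<0$, so $N<0$ just to the right of $1$.
\item $N(x)\to+\infty$ as $x\to\infty$, because $x^{\alpha+2}$ dominates $x^2$.
\end{itemize}
To exclude extra sign changes, I would study $M(x)=x^{-\alpha}N(x)=(1-\alpha)x^2+(1+\alpha)-x^{2-\alpha}-x^{-\alpha}$. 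One computes $M'(x)=(2-\alpha)x^{-\alpha-1}\bigl(2x^{\alpha+2}\cdot\tfrac{1-\alpha}{2-\alpha}-x^{2}+\tfrac{\alpha}{2-\alpha}\bigr)$. The bracket equals $0$ at $x=1$, first decreases and then increases for $x>1$, hence has exactly one zero in $(1,\infty)$. So $M$ decreases and then increases on $(1,\infty)$, and together with $M(1)=0$ and $M\to\infty$ this gives exactly one zero $x_0$ with the required signs.

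\textbf{Endpoint case.} For $\alpha=1$ we have $N=-(x-1)^2<0$, so $G$ is increasing and the supremum is the limit as $x\to\infty$, namely $2+4=6$. This agrees with the formula read as $x_0\to\infty$, and is the classical bound for $\mathscr{S}^*$.

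Sharpness follows from the equality case in Theorem \ref{thm:main}, i.e.\ $\omega(z)=z$, which gives the extremal function $f$ defined by $zf'(z)/f(z)=\varphi(z)$.
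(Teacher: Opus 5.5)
Your proposal is correct. Its skeleton is the paper's: apply Theorem \ref{Thm:nonnegativephi} to $\varphi(z)=((1+z)/(1-z))^\alpha$, use $(1-s^2)\varphi'(s)/\varphi(s)=2\alpha$, and maximize $2\alpha+(1-s^2)(\varphi(s)-1)/s$ over $(0,1)$. The difference lies in what is actually proved.

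\textbf{Coefficient condition.} The paper writes the coefficients of $Q$ as $\sum_{k=0}^n\binom{\alpha}{k}\binom{\alpha-k+n-1}{n-k}$ and calls their nonnegativity obvious. It is not termwise obvious, since $\binom{\alpha}{k}$ alternates in sign for $0<\alpha<1$. Your representation $\varphi=\exp\bigl(2\alpha\sum_{k\ge0}z^{2k+1}/(2k+1)\bigr)$ settles this cleanly.

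\textbf{Maximization.} The paper does not carry out the one-variable maximization at all; it simply defers to Sugawa. You do it explicitly: the substitution $x=(1+s)/(1-s)$, the formula $G'(x)=-4N(x)/(x^2-1)^2$, the identity $(x_0^\alpha-1)(x_0^2+1)=\alpha x_0^\alpha(x_0^2-1)$ converting $G(x_0)$ into the stated closed form, and the sign analysis of $M=x^{-\alpha}N$. That sign analysis uses the bracket in $M'$, which vanishes at $1$ and then decreases and increases. I checked each of these computations, and together they make the argument self-contained.

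\textbf{The case $\alpha=1$.} You also correctly notice that for $\alpha=1$ the equation becomes $-(x-1)^2=0$, which has no root in $(1,\infty)$. The statement as written is therefore degenerate there and must be read as the limiting value $6$, which your separate treatment supplies.

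\textbf{One small slip.} In fact $N''(1)=2(1-\alpha)(1+\alpha)-2=-2\alpha^2$, not $-2\alpha(1-\alpha)$. The sign, and hence your conclusion, is unchanged. That step is redundant anyway: $M(1)=0$ together with $M$ decreasing immediately to the right of $1$ already gives $N<0$ there.
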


\begin{proof}
	Let \(\varphi(z)=\left((1+z)/(1-z)\right)^\alpha\). By  simple calculations, we have
	\begin{align*}
		P(z)&=\frac{\varphi'(z)}{\varphi(z)}=2\alpha\sum_{n=0}^\infty z^{2n},\\
		Q(z)&=\varphi(z)-1=\sum_{n=1}^{\infty}\sum_{k=0}^n \binom{\alpha}{k} \binom{\alpha-k+n-1}{n-k}z^n.
	\end{align*}
	Obviously, the functions \(P(z)\) and \(Q(z)\) have nonnegative Maclaurin coefficients for \(0<\alpha\leq 1\). 
	By applying Theorem \ref{Thm:nonnegativephi}, we obtain the inequality
	\begin{align*}
		\|P_f\|&\leq\sup_{0<t<1}(1-t^2)\left(\frac{\varphi'(t)}{\varphi(t)}+\frac{\varphi(t)-1}{t}\right)=2\alpha+\sup_{0<t<1}\frac{(1-t^2)\left(-1+\left(\frac{1+t}{1-t}\right)^\alpha\right)}{t}.
	\end{align*}
	Following the approach of Sugawa \cite{1998Sugawa}, we deduce
	\begin{align*}
		\|P_f\| \leq 2\alpha+\frac{4\alpha x_0^{\alpha+1}}{x_0^2+1},
	\end{align*}
	where \(x_0\in(1,+\infty)\) is the unique root of the equation
	\begin{align*}
		(1-\alpha)x^{\alpha+2}+(1+\alpha)x^\alpha-x^2-1=0.
   \end{align*}
\end{proof}
\begin{corollary}
	For \(0<\lambda<{\pi}/{2}\) and \(f\in S^*(e^{\lambda z})\), we have
	\begin{align*}
		\|P_f\|\leq \frac{(1-t_0^2)(e^{\lambda t_0}+\lambda t_0-1)}{t_0},
	\end{align*}
	where \(t_0\) is the unique root of the equation
	\begin{align*}
		1+r^2-2\lambda r^3-e^{\lambda r}(1-\lambda r+r^2+\lambda r^3)=0
	\end{align*}
in (0,1).
\end{corollary}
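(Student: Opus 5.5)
The plan is to apply Theorem \ref{Thm:nonnegativephi} with $\varphi(z)=e^{\lambda z}$ and then maximize the resulting one-variable function. First I check the hypotheses. Since $\varphi'(z)/\varphi(z)=\lambda$, we have $P(z)\equiv\lambda$; this is a constant with non-negative coefficient, and then $\sup_{|w|=s}|P(w)|=\lambda=P(s)$, so the derivation leading to \eqref{nonnegativeFst} still goes through. Also $Q(z)=e^{\lambda z}-1=\sum_{n\ge1}\lambda^n z^n/n!$ has positive coefficients. The restriction $0<\lambda<\pi/2$ ensures $\operatorname{Re}\varphi>0$, so $\varphi\in\mathcal{M}$; here $\varphi$ is univalent because $\lambda<2\pi$, and $\varphi(\mathbb{D})$ is starlike with respect to $1$. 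Theorem \ref{Thm:nonnegativephi} then gives
\[
\|P_f\|\le \sup_{0<s<1} G(s),\qquad G(s):=\frac{1-s^2}{s}\bigl(\lambda s+e^{\lambda s}-1\bigr).
\]

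Next I locate the maximizer of $G$. Writing $G(s)=\lambda(1-s^2)+(1-s^2)(e^{\lambda s}-1)/s$ and differentiating gives
\[
s^2G'(s)=-2\lambda s^3+\bigl(e^{\lambda s}-1\bigr)(-1-s^2)+(1-s^2)\lambda s e^{\lambda s}.
\]
After expanding and changing sign, this equals $1+s^2-2\lambda s^3-e^{\lambda s}(1-\lambda s+s^2+\lambda s^3)$, which is exactly the function $k(s)$ in the statement. So $G'(s)=k(s)/s^2$.

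The sign analysis needs the following facts.
\begin{itemize}
\item Near $0$, the Taylor expansion gives $k(s)=\lambda^2s^2/2+O(s^3)>0$. Also $G(0^+)=2\lambda$.
\item At $1$, we have $k(1)=2-2\lambda-2e^{\lambda}<0$ and $G(1)=0$.
\item Hence a root exists in $(0,1)$.
\end{itemize}
The main obstacle is uniqueness of the root, i.e. that $k$ changes sign exactly once. I would first try writing $k(s)=e^{\lambda s}m(s)$ with $m(s)=(1+s^2-2\lambda s^3)e^{-\lambda s}-(1-\lambda s+s^2+\lambda s^3)$. Then I would differentiate $m$, aiming to show that $m'$ has a single sign change, so that $m$ rises and then falls. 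Checking $m'(0)=0$, $m''(0)>0$ and the sign of $m'(1)$ would settle this. A fallback is to show directly that $s^2G'$ cannot vanish twice, by proving $G$ concave on $(0,1)$. The first term $\lambda(1-s^2)$ is concave. For the second term, $(1-s^2)\sum_{n\ge1}\lambda^n s^{n-1}/n!$, I would check $d^2/ds^2\le0$ coefficientwise for $\lambda<\pi/2$. Concavity would also give uniqueness of $t_0$ immediately.

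Once uniqueness holds, $G$ increases on $(0,t_0)$ and decreases on $(t_0,1)$, so $\sup G=G(t_0)$, which is the stated bound. For sharpness I would cite Theorem \ref{thm:main}, whose equality case $\omega(z)=z$ corresponds to the extremal $f(z)=z\exp\bigl(\int_0^z (e^{\lambda \zeta}-1)/\zeta\,d\zeta\bigr)$, evaluated along the positive real axis.
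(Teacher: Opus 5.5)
Your proof is correct and has the same skeleton as the paper's. You apply Theorem~\ref{Thm:nonnegativephi}, correctly handling the constant term of $P\equiv\lambda$. You obtain the same formula for $s^2G'(s)$, and you get uniqueness of $t_0$ from strict concavity of $G$ together with $G'(0^+)>0>G'(1)$.

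The difference is how concavity is proved. Promote your fallback to the main argument: the checks you list for $m$ ($m'(0)=0$, $m''(0)>0$, the sign of $m'(1)$) do not by themselves rule out further sign changes of $m'$. The coefficientwise check does work. For $j\ge2$ the coefficient of $s^j$ in $(1-s^2)(e^{\lambda s}-1)/s$ is
\[
c_j=\frac{\lambda^{j+1}}{(j+1)!}-\frac{\lambda^{j-1}}{(j-1)!}=\frac{\lambda^{j-1}}{(j-1)!}\Bigl(\frac{\lambda^2}{j(j+1)}-1\Bigr)<0,
\]
because $\lambda^2<6\le j(j+1)$. Hence $G''(s)=-2\lambda+\sum_{j\ge2}j(j-1)c_js^{j-2}<0$ on $[0,1)$.

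The paper instead works in closed form. It sets $k(t)=t^3h_\lambda''(t)$, computes $k'(t)=-\lambda t^2\bigl(6+e^{\lambda t}(6-\lambda^2+6\lambda t+\lambda^2t^2)\bigr)<0$, and uses $k(0)=0$. Both arguments rest on the same inequality $\lambda^2<6$. Yours avoids differentiating the exponential expression twice, while the paper's needs no series bookkeeping. Your value $G'(0^+)=\lambda^2/2$ is the correct one; the paper writes $\lambda^2$, which is harmless for the sign.

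Two minor points. First, $e^{\lambda z}$ is univalent on $\mathbb{D}$ exactly when $\lambda\le\pi$; for $\lambda>\pi$ the points $\pm i\pi/\lambda$ have the same image. So your stated reason ``$\lambda<2\pi$'' should be corrected, though the conclusion holds for $\lambda<\pi/2$. Second, your sharpness remark goes beyond the statement but is right: for your extremal function, $(1-t^2)P_f(t)=G(t)$ on $(0,1)$.
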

\begin{proof}
	Let \(\varphi_{\lambda }(z)=e^{\lambda z}, 0<\lambda<{\pi}/{2}\). Through straightforward calculations, we obtain
	\begin{align*}
		P_\lambda(z)=\frac{\varphi'_\lambda (z)}{\varphi_\lambda (z)}=\lambda,\quad \text{and}\quad
		Q_\lambda(z)=\varphi_\lambda (z)-1=\sum_{n=1}^\infty\frac{ \lambda^n  z^n}{n!}.
	\end{align*}
	Since $0 < \lambda < \pi/2$,  both $P_\lambda$ and $Q_\lambda$ have non-negative Maclaurin coefficients. 
	 Applying Theorem \ref{Thm:nonnegativephi}, we obtain
	\[
	\|P_f\| \leq \sup_{0<t<1} (1-t^2)\left(\frac{\varphi_\lambda'(t)}{\varphi_\lambda(t)} + \frac{\varphi_\lambda(t)-1}{t}\right) = \sup_{0<t<1} \frac{(1-t^2)(e^{\lambda t} + \lambda t - 1)}{t}.
	\]
	Define 
	\begin{align*}
		h_{\lambda}(t)=\frac{(1-t^2)(e^{\lambda t}+\lambda t-1)}{t}.
	\end{align*}
	Taking the derivative of $h_{\lambda}(t)$ with respect to t, we have
	\begin{align*}
		t^2h_{\lambda}'(t)=1+t^2-2\lambda t^3-e^{\lambda t}(1-\lambda t+t^2+\lambda t^3)
	\end{align*}
	and
	\begin{align*}
		t^3h_{\lambda}''(t)
		&=-2(1+\lambda t^3)+e^{\lambda t}(2-2\lambda t +\lambda^2 t^2-2\lambda t^3-\lambda^2 t^4)=:k(t).
	\end{align*}
	We have
	\begin{align*}
		k'(t)=-\lambda t^2(6+e^{\lambda t}(6+6\lambda t-\lambda^2+t^2\lambda^2)).
	\end{align*}
	It is readily verified that \(k'(t) < 0\) for \(t \in (0,1)\) and \(\lambda \in (0,\pi/2)\), which implies that \(k(t)\) is strictly decreasing on the interval \((0,1)\). Since \(k(0) = 0\), we can conclude that \(k(t) < 0\) for all \(t \in (0,1)\). Therefore, \(h''(t) < 0\) holds for every \(t \in (0,1)\), meaning that \(h'(t)\) is also strictly decreasing on \((0,1)\). In particular, we have $\lim_{t\to 0} h_{\lambda}'(t) = \lambda^2 > 0$ and $h_{\lambda}'(1) = -2(e^\lambda - 1) - 2\lambda < 0$. By the intermediate value theorem and the strict monotonicity of $h_{\lambda}'$, there exists a unique $t_0 \in (0,1)$ such that $h_{\lambda}'(t_0) = 0$.
	Hence $h_{\lambda}(t)$ is increasing on $(0,t_0)$ and decreasing on $(t_0,1)$, so that $h_{\lambda}$ attains its maximum at $t_0$. Consequently,
	\begin{align*}
		\|P_f\|\leq \frac{(1-t_0^2)(e^{\lambda t_0}+\lambda t_0-1)}{t_0}.
	\end{align*}
\end{proof}
\begin{corollary}
	For \(f\in S^*_{con}\), i.e., \(zf'(z)/f(z)\prec 3/(3+(\alpha-3)z-\alpha z^2),\) \(-3<\alpha\leq0 \), we have \[||P_f||\leq 2(6+\alpha)/(3+\alpha).\]
\end{corollary}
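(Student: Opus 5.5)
The plan is to show that $\varphi(z)=3/(3+(\alpha-3)z-\alpha z^2)$ satisfies the hypotheses of Theorem \ref{Thm:nonnegativephi}, and then to evaluate the resulting one-variable supremum in closed form. The first step is to factor the denominator as $3+(\alpha-3)z-\alpha z^2=(1-z)(3+\alpha z)$, so that
\[
\varphi(z)=\frac{1}{1-z}\cdot\frac{1}{1+\alpha z/3}.
\]
Since $-3<\alpha\le 0$, both factors are geometric series with nonnegative coefficients, because $-\alpha/3\in[0,1)$. Hence $Q=\varphi-1$ has nonnegative Maclaurin coefficients. Taking logarithmic derivatives gives
\[
P(z)=\frac{\varphi'(z)}{\varphi(z)}=\frac{1}{1-z}-\frac{\alpha}{3+\alpha z}=\sum_{n\ge0}z^n+\frac{|\alpha|}{3}\sum_{n\ge0}\Big(\frac{|\alpha| z}{3}\Big)^n .
\]
This also has nonnegative coefficients. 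So Theorem \ref{Thm:nonnegativephi} applies and reduces the problem to computing $\sup_{0<s<1}G(s)$, where $G(s)=(1-s^2)P(s)+(1-s^2)(\varphi(s)-1)/s$.

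Next I will simplify $G$. One has $\varphi(s)-1=s(3-\alpha+\alpha s)/((1-s)(3+\alpha s))$. Therefore
\[
(1-s^2)\frac{\varphi(s)-1}{s}=\frac{(1+s)(3-\alpha+\alpha s)}{3+\alpha s}
\quad\text{and}\quad
(1-s^2)P(s)=(1+s)-\frac{\alpha(1-s^2)}{3+\alpha s}.
\]
Combining the two terms should give
\[
G(s)=\frac{(1+s)(6-2\alpha+3\alpha s)}{3+\alpha s},
\qquad G(1^-)=\frac{2(6+\alpha)}{3+\alpha}.
\]
It therefore suffices to show that $G$ is nondecreasing on $(0,1)$.

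The main (mild) obstacle is this monotonicity, since the factor $(6-2\alpha+3\alpha s)/(3+\alpha s)$ can decrease when $-3/2<\alpha<0$. I will compute $G'(s)=N(s)/(3+\alpha s)^2$ with
\[
N(s)=(6-2\alpha+3\alpha s)(3+\alpha s)+\alpha(3+2\alpha)(1+s)=3\alpha^2s^2+18\alpha s+18-3\alpha+2\alpha^2 .
\]
This is a convex quadratic in $s$ (or linear if $\alpha=0$) whose vertex lies at $s=-3/\alpha\ge 1$. Hence $N$ is decreasing on $[0,1]$, and its minimum there is $N(1)=5\alpha^2+15\alpha+18$. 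The discriminant of $N(1)$ as a polynomial in $\alpha$ is $225-360<0$, so $N(1)>0$. Thus $G$ is increasing, and the supremum is the limit $2(6+\alpha)/(3+\alpha)$.

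For sharpness, I will take $\omega(z)=z$, that is, the function $f$ with $zf'(z)/f(z)=\varphi(z)$. As noted in the proof of Theorem \ref{thm:main}, this choice gives equality in the pointwise estimate. Letting $z=s\to1^-$ along the real axis then attains the bound.
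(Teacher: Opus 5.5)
Your proposal is correct and follows essentially the paper's route: you check that $P$ and $Q$ have nonnegative Maclaurin coefficients, apply Theorem \ref{Thm:nonnegativephi} to get the same function $(1+t)(6-2\alpha+3\alpha t)/(3+\alpha t)$, show that its derivative's numerator $3\alpha^2t^2+18\alpha t+18-3\alpha+2\alpha^2$ is positive, and take the limit as $t\to1^-$. The differences are minor: the paper proves positivity by treating the numerator as a quadratic in $\alpha$ with discriminant $27(4t^2-4t-5)<0$ for each fixed $t$, whereas you use convexity in $s$ (vertex at $-3/\alpha\ge 1$) to reduce to $N(1)=5\alpha^2+15\alpha+18>0$; your factorization into two geometric series is a cleaner coefficient check, and your sharpness remark via $\omega(z)=z$ is a correct addition that the paper does not state for this corollary.
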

\begin{proof}
	Let \(\varphi_{con}= 3/(3+(\alpha-3)z-\alpha z^2)\), and \(-3<\alpha \leq0\). By simple calculations, we obtain
	\begin{align*}
		P_{con}(z)&=\frac{\varphi'_{con}(z)}{\varphi_{con}(z)}=\left(\frac{-1}{z-1}+\frac{-\alpha}{3+\alpha z} \right)=\sum_{n=0}^\infty\left[1+(-1)^{n+1}\frac{\alpha^{n+1}}{3^{n+1}} \right] z^n, \\
		Q_{con}(z)&=\varphi_{con}(z)-1=\sum_{n=1}^\infty\left[ \frac{3}{\alpha+3}+\frac{(-1)^n\alpha^{n+1}}{(\alpha+3)3}\right] z^n.
	\end{align*}
	It follows that \(P_{\mathrm{con}}(z)\) and \(Q_{\mathrm{con}}(z)\) possess nonnegative Maclaurin coefficients. Therefore, by applying Theorem  \ref{Thm:nonnegativephi}, we obtain
	\begin{align*}
		||P_f||&\leq\sup_{0<t<1}  (1-t^2)\left(\frac{\varphi'_{con}(t)}{\varphi_{con}(t)}+\frac{\varphi_{con}(t)-1}{t}\right)
		=\sup_{0<t<1}\frac{(1+t)\left(6+\alpha (-2+3 t) \right) }{3+\alpha t}.
	\end{align*}
	Define  
	\begin{align*}
		h_{con}(t)=\frac{(1+t)\left(6+\alpha (-2+3 t) \right) }{3+\alpha t}.
	\end{align*}
	Taking the derivative of $h_{con}(t)$ with respect to $t$, we obtain
	\[
	h_{con}'(t)=\frac{18 + 3\alpha (-1 + 6t) + \alpha^2 (2 + 3t^2)}{(3 + \alpha t)^2}.
	\]
	The numerator of $h_{con}'(t)$ is a quadratic function in $\alpha$ and is nonnegative for $\alpha \in (-3, 0]$. Since $2 + 3t^2 > 0$ and the discriminant satisfies $\Delta = 27(-5 +4t(t-1)) < 0$, it follows that $h_{con}(t)$ is increasing on $(0,1)$. Consequently,
	\[
	\|P_f\| \leq h_{con}(1) = \frac{2(6+\alpha)}{3+\alpha}.
	\]
\end{proof}
\begin{corollary}
	For \(f\in S^*_{cs},\), i.e., \(zf'(z)/f(z)\prec 1+z/((1-z)(1+a z)), 0< a\leq 1/2\), we have \(||P_f||\leq{2(2+a)}/{(1+a)}.\) The estimate is sharp.
\end{corollary}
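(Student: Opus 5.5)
The plan is to follow the same pattern as the preceding corollaries. We write $\varphi_{cs}(z)=1+z/((1-z)(1+az))$ with $0<a\le 1/2$, check the coefficient hypotheses of Theorem \ref{Thm:nonnegativephi}, and then maximize an explicit one-variable function.

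\textbf{Step 1: coefficients.} By partial fractions,
\[
Q_{cs}(z)=\frac{z}{(1-z)(1+az)}=\frac{1}{1+a}\sum_{n\ge1}\bigl(1-(-a)^n\bigr)z^n ,
\]
and every coefficient is positive since $0<a<1$. For $P_{cs}=\varphi_{cs}'/\varphi_{cs}$ we write
\[
\varphi_{cs}(z)=\frac{1+(1-a)z-az^2}{(1-z)(1+az)} .
\]
Hence $P_{cs}=N'/N-\bigl(-1/(1-z)\bigr)-a/(1+az)$, where $N(z)=1+(1-a)z-az^2$. We factor $N(z)=(1-\beta_1 z)(1-\beta_2 z)$, whose roots $1/\beta_i$ are real, one positive and one negative. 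Then $P_{cs}$ becomes a sum of geometric series
\[
\sum_n\bigl(1-(-a)^{n+1}-\beta_1^{n+1}-\beta_2^{n+1}\bigr)z^n .
\]
Here $\beta_{1,2}=\bigl((1-a)\pm\sqrt{(1-a)^2+4a}\bigr)/2\cdot(-1)$ up to sign bookkeeping. For $a\le1/2$, the moduli $|\beta_i|$ stay below $1$, with the negative-root contributions controlled by the $1$ from $1/(1-z)$. The condition $a\le 1/2$ is presumably exactly what makes these coefficients nonnegative. This step is the main obstacle: I would verify nonnegativity by bounding $|\beta_1|^{n+1}+|\beta_2|^{n+1}+a^{n+1}<1$ where needed, treating $n$ even and odd separately. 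If that fails, I would fall back on Theorem \ref{thm:main} directly, showing $\sup_{|w|=s}|P_{cs}(w)|=P_{cs}(s)$ by a direct modulus comparison on the circle.

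\textbf{Step 2: reduce to one variable.} Theorem \ref{Thm:nonnegativephi} then gives $\|P_f\|\le\sup_{0<t<1}h_{cs}(t)$, where
\[
h_{cs}(t)=(1-t^2)\Bigl(P_{cs}(t)+\frac{Q_{cs}(t)}{t}\Bigr).
\]
Simplifying with $Q_{cs}(t)/t=1/((1-t)(1+at))$, the factor $1-t$ cancels in both terms. We get
\[
h_{cs}(t)=\frac{1+t}{1+at}+(1-t^2)\Bigl(\frac{1-a-2at}{1+(1-a)t-at^2}+\frac{1}{1-t}-\frac{a}{1+at}\Bigr).
\]

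\textbf{Step 3: maximize.} I would show that $h_{cs}$ is increasing on $(0,1)$ by clearing denominators. The numerator of $h_{cs}'$ should be a polynomial in $t$ and $a$ that is positive for $t\in(0,1)$ and $a\in(0,1/2]$. The limit is then
\[
h_{cs}(1^-)=\frac{2}{1+a}+2=\frac{2(2+a)}{1+a},
\]
since the remaining terms vanish as $t\to1$. For sharpness, take $\omega(z)=z$ as in Theorem \ref{thm:main}, that is, $f$ with $zf'/f=\varphi_{cs}(z)$, and let $z=t\to1^-$ along the real axis.
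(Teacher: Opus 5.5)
\textbf{Verdict: genuine gap.} Your overall strategy matches the paper's: nonnegative coefficients, Theorem \ref{Thm:nonnegativephi}, monotonicity of a one-variable function, the value at $t=1$, and sharpness via the function with $zf'/f=\varphi_{cs}$ on the positive radius. But the step you yourself call the main obstacle is not carried out, and the algebra feeding it is wrong.

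\emph{Wrong numerator.} $(1-z)(1+az)+z=1+az-az^2$, not $1+(1-a)z-az^2$. Your $N$ factors as $(1+z)(1-az)$, so one of your $\beta_i$ equals $-1$ for every $a$, contradicting your claim that $|\beta_i|<1$. As a result, the $h_{cs}$ in your Step 2 is the wrong function. Its value at $t=1$ comes out right only because the $N'/N$ term carries the factor $1-t^2$. The correct reduction is $h_{cs}(t)=(2+at)(1+t)/\bigl((1+at)(1+at-at^2)\bigr)$.

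\emph{Sign error.} The term $-a/(1+az)$ contributes $+(-a)^{n+1}$, not $-(-a)^{n+1}$, to the $n$th coefficient.

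\emph{The real gap: your bound cannot give nonnegativity.} The bound $|\beta_1|^{n+1}+|\beta_2|^{n+1}+a^{n+1}<1$ fails on part of the range $0<a\le 1/2$. Already for $n=1$ the true coefficient is $1-2a\ge 0$, but your bound requires $2a^2+2a<1$, i.e.\ $a<(\sqrt{3}-1)/2$. At $a=1/2$ the correct root satisfies $\beta_2=-1$ exactly, so no strict inequality of this type can hold there. The term $a^{n+1}$ has to be used, not discarded.

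The missing idea, which the paper uses, is a factorization. Write $1+az-az^2=(1-\beta_1z)(1-\beta_2z)$; then $\beta_1\beta_2=-a$, so the $n$th coefficient of $P_{cs}$ is
\[
1+(\beta_1\beta_2)^{n+1}-\beta_1^{n+1}-\beta_2^{n+1}=\bigl(1-\beta_1^{n+1}\bigr)\bigl(1-\beta_2^{n+1}\bigr)\ge 0 .
\]
This holds because $0<\beta_1\le 1/2$ and $-1\le\beta_2<0$, and the inequality $\beta_2\ge -1$ is exactly the condition $a\le 1/2$.

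\emph{Step 3 is only asserted.} The paper writes the numerator of $h_{cs}'$ explicitly as
\[
2-3a+(6a-4a^2)t+(2a+7a^2-a^3)t^2+(4a^2+2a^3)t^3+a^3t^4 .
\]
All its coefficients are positive for $0<a\le1/2$, which gives $\sup h_{cs}=h_{cs}(1^-)=2(2+a)/(1+a)$.
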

\begin{proof}
	Let \(\varphi_{cs}=1+z/((1-z)(1+a z))\) and \(0< a\leq 1/2\). Through direct calculations, we derive the following results:
	\begin{align*}
		P_{cs}(z)&=\frac{\varphi'_{cs}(z)}{\varphi_{cs}(z)}=\frac{1}{1-z}-\frac{a}{1+az}+\frac{a(2z-1)}{az^2-az-1}=\sum_{n=0}^\infty\left[ 1-(-1)^na^{n+1}- \frac{1}{r_1^{n+1}}-\frac{1}{r_2^{n+1}} \right] z^n,\\
		Q_{cs}(z)&=\varphi_{cs}(z)-1=\frac{1}{a+1}\sum_{n=0}^\infty[1+(-1)^na^{n+1}]z^{n+1},
	\end{align*}
	where 
	\(
	r_{1,2} ={a\pm\sqrt{a^2+4a}}/(2a).
	\)
	Since $0< a\leq 1/2$, we deduce that \(1/r_1 \in [0, 1/2]\), \(1/r_2 \in [-1, 0]\). By applying the fact \(r_1 r_2 = -{1}/{a}\), we have
	\begin{align*}
		1 - (-1)^n a^{n+1} - \frac{1}{r_1^{n+1}} - \frac{1}{r_2^{n+1}} = \left(1 - \frac{1}{r_1^{n+1}}\right)\left(1 - \frac{1}{r_2^{n+1}}\right) \geq 0.
	\end{align*} 
	It follows that the functions \(P_{\text{cs}}(z)\) and \(Q_{\text{cs}}(z)\) have non-negative Maclaurin coefficients for \(0 < a \leq {1}/{2}\).
	Therefore,  by Theorem \ref{Thm:nonnegativephi}, we obtain
	\begin{align*}
		||P_f||&\leq\sup_{0 < t < 1}  (1-t^2)\left(\frac{\varphi_{cs}'(t)}{\varphi_{cs}(t)}+\frac{\varphi_{cs}(t)-1}{t}\right)=\sup_{0<t<1}-\frac{(2+at)(1+t)}{(1+at)(at^2-at-1)}.
	\end{align*}
	Set 
	\begin{align*}
		h_{cs}(t)=-\frac{(2+at)(1+t)}{(1+at)(at^2-at-1)}.
	\end{align*}
	For \(0<t<1\) and \(0< a\leq 1/2\), we have
	\begin{align*}
		h'_{cs}(t) = \frac{2 - 3a + (6a - 4a^2)t + (2a + 7a^2 - a^3)t^2 + (4a^2 + 2a^3)t^3 + a^3 t^4}{(1 + at)^2 \left(1 - a t(1 - t)\right)^2} \geq 0.
	\end{align*}
	Thus, \(h_{cs}(t)\) is non-decreasing function on \((0,1)\), and
	\begin{align*}
		\|P_f\| \leq h_{cs}(1) = \frac{2(2 + a)}{1 + a}.
	\end{align*}
	To demonstrate  its sharpness, we consider the function \(f_1\) defined by
	\begin{align*}
		f_1(z) = z \exp\left( \int_{0}^{z}\frac{1}{(1-t)(1+at)} \,dt \right).
	\end{align*}
	Direct computation yields
	\begin{align*}
		\frac{f_1''(z)}{f_1'(z)} = \frac{2+az}{(z-1)(1+az)\left(1 - a z(1 - z)\right)}.
	\end{align*}
	The pre-Schwarzian norm of \(f_1\) is given by
	\begin{align*}
		||P_{f_1}||=\sup_{z\in\mathbb{D}}(1-|z|^2)|P_{f_1}|=\sup_{z\in\mathbb{D}}(1-|z|^2)\left|\frac{2+az}{(z-1)(1+az)(-1+a(-1+z)z)} \right| .
	\end{align*}
	When $z=r>0$, we obtain
	\begin{align*}
		\sup_{0\leq r<1}(1-r^2)\left|\frac{2+ar}{(r-1)(1+ar)\left( -1+a(-1+r)r\right) } \right|
		=\sup_{0\leq r<1}-\frac{(2+ar)(1+r)}{(1+ar)(ar^2-ar-1)}=\frac{2(2+a)}{1+a}.
	\end{align*}
	This completes the proof.
\end{proof}
\begin{remark}
	When \(a=0\), this result also holds.
\end{remark}

\begin{corollary}
	If \(f\in S^*_{lim},\), i.e., \(zf'(z)/f(z)\prec (1+z)(1-s^*z), 0\leq s^*\leq 1/3\), then we have
	\begin{align*}
		||P_f||\leq \frac{(1-s^*+2s^*t_{s^*})(1+t_{s^*})}{(1+s^*t_{s^*})}+(1-t_{s^*}^2)(1-s^*+s^*t_{s^*}),
	\end{align*}
	where \(t_{s^*}\) is the unique root of the equation
	\begin{align*}
		\frac{-3s^*t^4+(2s^{*3}-8s^{*2})t^3+(s^{*3}+6s^{*2}-7s^{*})t^2+(2s^{*2}+6s^*-2)t+s^{*2}+s^*+1}{(1+s^*t)^2}=0
	\end{align*}
in \((0,1)\).
\end{corollary}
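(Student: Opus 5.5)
We use Theorem~\ref{thm:main} directly, because neither Theorem~\ref{Thm:nonnegativephi} nor Theorem~\ref{Thm:oddandeven} applies as stated. Put $\varphi(z)=(1+z)(1-s^*z)=1+(1-s^*)z-s^*z^2$. Then
\[
P(z)=\frac{1}{1+z}-\frac{s^*}{1-s^*z}=\sum_{n\ge0}\bigl((-1)^n-(s^*)^{n+1}\bigr)z^n .
\]
Its even-order coefficients $1-(s^*)^{n+1}$ are non-negative and its odd-order coefficients $-1-(s^*)^{n+1}$ are negative. Hence $\sup_{|w|=s}|P(w)|=P(-s)=\frac{1}{1-s}-\frac{s^*}{1+s^*s}$.

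For $Q$ we work by hand: $Q(w)=w\bigl((1-s^*)-s^*w\bigr)$. Since $0\le s^*\le 1/3$, we have $1-s^*>0$ and $|(1-s^*)-s^*w|\le 1-s^*+s^*s$ on $|w|=s$, with equality at $w=-s$. Thus $\sup_{|w|=s}|Q(w)|=s(1-s^*+s^*s)$. This is a mixed situation: $P$ is evaluated at $-s$, and $|Q|$ is also maximized at $w=-s$.

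Substituting into \eqref{Fst} gives
\[
F(s,t)=(1-s^2)\Bigl(\frac{1}{1-s}-\frac{s^*}{1+s^*s}\Bigr)+\frac{1-t^2}{t}\,s(1-s^*+s^*s).
\]
Then $\partial F/\partial t=-\frac{1+t^2}{t^2}\,s(1-s^*+s^*s)\le0$, so $F(s,t)\le F(s,s)$, exactly as in Section~\ref{sec2}. Next we simplify
\[
(1+s)-\frac{s^*(1-s^2)}{1+s^*s}=\frac{(1+s)(1-s^*+2s^*s)}{1+s^*s}.
\]
Renaming $s$ as $t$, we obtain $\|P_f\|\le\sup_{0<t<1}h(t)$, where
\[
h(t)=\frac{(1-s^*+2s^*t)(1+t)}{1+s^*t}+(1-t^2)(1-s^*+s^*t).
\]

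It remains to show that $h$ has a unique interior maximizer. Differentiating and clearing the factor $(1+s^*t)^2$ should produce the numerator $N(t)$ displayed in the statement. We have $N(0)=(s^*)^2+s^*+1>0$ and
\[
N(1)=3(s^*)^3+(s^*)^2-3s^*-1=(3s^*+1)\bigl((s^*)^2-1\bigr)<0,
\]
so there is a root in $(0,1)$ by the intermediate value theorem.

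The main obstacle is uniqueness of this root. We would first try to show that $N$ is strictly decreasing on $(0,1)$. Its derivative is
\[
N'(t)=-12s^*t^3+3\bigl(2(s^*)^3-8(s^*)^2\bigr)t^2+2\bigl((s^*)^3+6(s^*)^2-7s^*\bigr)t+2(s^*)^2+6s^*-2 .
\]
The cubic and quadratic terms are non-positive for $s^*\le1/3$, since $2(s^*)^3-8(s^*)^2\le0$. The linear coefficient $(s^*)^3+6(s^*)^2-7s^*=s^*(s^*-1)(s^*+7)$ is $\le0$. The constant term $2(s^*)^2+6s^*-2$ is negative exactly when $s^*<(\sqrt{13}-3)/2\approx0.30$.

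So for $s^*\le(\sqrt{13}-3)/2$ we get $N'<0$ on $(0,1)$. For the remaining range $s^*$ close to $1/3$, we would instead show that $N$ is concave, i.e.\ $N''\le0$, which holds by the same sign pattern: every coefficient of $N''$ is non-positive. Then $N$ is concave with $N(0)>0$ and $N(1)<0$, so it has exactly one zero. In fact concavity handles the whole range $0\le s^*\le1/3$ at once, so we would use it as the main argument.

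Hence $h'>0$ on $(0,t_{s^*})$ and $h'<0$ on $(t_{s^*},1)$. The supremum is $h(t_{s^*})$, which is the claimed bound. The only degenerate case is $s^*=0$: there $N(t)=1-2t$, so $t_0=1/2$, and the same argument goes through.
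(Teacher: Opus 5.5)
\textbf{The gap.} The step that fails is the one you hedge with ``should'': you never actually differentiate $h$. In fact $(1+s^*t)^2h'(t)$ is \emph{not} the quartic printed in the statement. Its $t^4$ coefficient is $-3(s^*)^3$, not $-3s^*$; this is a misprint in the paper. Indeed
\[
h'(t)=1+s^*-2t+2s^*t-3s^*t^2+\frac{s^*\bigl(s^*+2t+s^*t^2\bigr)}{(1+s^*t)^2},
\]
and after multiplying by $(1+s^*t)^2$ the only quartic term is $-3s^*t^2\cdot(s^*)^2t^2$.

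Your own value $N(1)=(3s^*+1)((s^*)^2-1)$ should have raised the alarm. A direct computation gives $h'(1)=(s^*-1)/(s^*+1)$, which forces $(1+s^*)^2h'(1)=(s^*)^2-1$. The two quartics have different roots: for $s^*=1/3$, about $0.756$ for the printed one versus $0.827$ for the true one. Correspondingly, $h$ at the printed root is about $2.036$, strictly below $\sup h\approx 2.042$. The function $f$ with $zf'(z)/f(z)=\varphi(z)$ satisfies $(1-t^2)|P_f(-t)|=h(t)$, so the statement read literally is actually false. Your inference ``hence $h'>0$ on $(0,t_{s^*})$ and $h'<0$ on $(t_{s^*},1)$'' is therefore unsupported: you proved uniqueness of the zero of a polynomial that is not the numerator of $h'$.

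\textbf{The repair, and comparison with the paper.} The fix is immediate, and your idea survives intact. The correct numerator is
\[
N(t)=-3(s^*)^3t^4+\bigl(2(s^*)^3-8(s^*)^2\bigr)t^3+\bigl((s^*)^3+6(s^*)^2-7s^*\bigr)t^2+\bigl(2(s^*)^2+6s^*-2\bigr)t+(s^*)^2+s^*+1 .
\]
Every coefficient of $N''$ is still non-positive. Also $N(0)=1+s^*+(s^*)^2>0$ and $N(1)=(s^*)^2-1<0$. So $N$ has exactly one zero in $(0,1)$, and it is the maximizer $t_{s^*}$ of $h$, provided $t_{s^*}$ is defined through the corrected equation.

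The paper instead proves $h''<0$ outright. In closed form,
\[
h''(t)=-2(1-s^*+3s^*t)+\frac{2s^*(1-(s^*)^2)}{(1+s^*t)^3},
\]
so $h'$ decreases strictly from $1+s^*+(s^*)^2$ to $(s^*-1)/(s^*+1)$. Concavity of $h$ is a slightly stronger conclusion. Your concavity of the polynomial numerator avoids differentiating a rational function twice; both arguments are equally short once the numerator is right.

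\textbf{A side remark.} Theorem~\ref{Thm:oddandeven} does apply here, contrary to what you say. Its hypothesis concerns $1-\varphi(z)=-(1-s^*)z+s^*z^2$, which has exactly the required sign pattern, and it is what the paper uses. (Its displayed conclusion should read $-\phi(s)$ rather than $-\phi(-s)$.) Your detour through Theorem~\ref{thm:main} simply re-derives that case and lands on the same $h$ as the paper, so it is harmless.
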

\begin{proof}
	Let \(\varphi_{s^*}(z)=(1+z)(1-s^*z), s^*\in [0,1/3]\); then  we have
	\begin{align*}
		P_{s^*}(z)&=\frac{\varphi'_{s^*}(z)}{\varphi_{s^*}(z)}=\frac{1}{1+z}+\frac{s^*}{s^*z-1}=\sum_{n=0}^\infty\left[(-1)^n-s^{*(n+1)} \right]z^n, \\
		Q_{s^*}(z)&=1-\varphi_{s^*}(z)=z(s^*-1) +s^*z^2.
	\end{align*}
	Since \(0\leq s^*\leq 1/3\), the Maclaurin series of \(P_{s^*}(z)\) and \(Q_{s^*}(z)\) are alternating. Let \(\phi_{s^*}(s)=\varphi_{s^*}(-s)\). By Theorem \ref{Thm:oddandeven}, we have
	\begin{align*}
		\|P_f\|&\leq\sup_{0<t<1}\frac{1-t^2}{t}\left(-\frac{t\phi'_{s^*}(t)}{\phi_{s^*}(t)}-\phi_{s
			^*}(-t)+1\right)\\
&=\sup_{0 < t < 1}\left( \frac{(1+t)(1+s^*(2t-1))}{1+ts^*}+(1-t^2)(1-s^*+s^*t)\right) .
	\end{align*}
	Define 
	\begin{align*}
		h_{s^*}(t)=\frac{(1+t)(1+s^*(2t-1))}{1+ts^*}+(1-t^2)(1-s^*+s^*t).
	\end{align*}
	Taking the derivative of $h_{s^*}(t)$ with respect to $t$, we obtain
	\begin{align*}
		h_{s^*}'(t)&=\frac{-3s^*t^4+(2s^{*3}-8s^{*2})t^3+(s^{*3}+6s^{*2}-7s^{*})t^2+(2s^{*2}+6s^*-2)t+s^{*2}+s^*+1}{(1+s^*t)^2},\\
		h_{s^*}''(t)&=\frac{-2\left( 1 - 2 s^* + s^{*3} + (6 s^* - 3 s^{*2}) t + (12s^{*2} - 3 s^{*3}) t^2 + (10 s^{*3} - s^{*4}) t^3 + 3 s^{*4}t^4\right) }{(1 + s^* t)^3}.
	\end{align*}
	It is straightforward to see that $h_{s^*}''(t) < 0$. Hence, $h_{s^*}'(t)$ is strictly decreasing on $[0,1]$. Moreover, for all \(0\leq s^*\leq 1/3\), we have
	\[
	\lim_{t\to 0^+} h_{s^*}'(t) = 1 + s^* + s^{*2} > 0,\qquad \text{and}\quad
	\lim_{t\to 1^-} h_{s^*}'(t) = \frac{-1 + s^*}{1 + s^*} < 0.
	\]
	 Therefore, we conclude that $h_{s^*}'(t)$ has a unique root $t_{s^*}\in(0,1)$, and $h_{s^*}(t)$ attains its maximum at $t_{s^*}$. Consequently,
	\begin{align*}
		\|P_f\| \leq h_{s^*}(t_{s^*}) = \frac{(1-s^*+2s^*t_{s^*})(1+t_{s^*})}{(1+s^*t_{s^*})} + (1-t_{s^*}^2)(1-s^*+s^*t_{s^*}).
	\end{align*}
	This completes the proof.
\end{proof}

\begin{corollary}\cite{Allu2026}
	If \(f\in S\left( (1-z)^{-\alpha}\right) )\) with \(0<\alpha\leq 1\), then
	\begin{align*}
		\|P_f\|\leq\begin{cases}
			(1-t_1^2)\left(\dfrac{\alpha}{1-t_1}+\dfrac{-1+(1-t_1)^{-\alpha}}{t_1} \right), &\text{for } 0<\alpha<1,\\
			4, &\text{for } \alpha=1,
		\end{cases}
	\end{align*}
	where \(t_1\) is the unique root in \((0,1)\) of the equation
	\begin{align*}
		(1-t)^{-\alpha}\left(\alpha t\bigl(1+t+(1-t)^\alpha t\bigr)+\bigl(-1+(1-t)^\alpha\bigr)(1+t^2) \right) =0.
	\end{align*}
\end{corollary}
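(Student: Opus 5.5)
We take $\varphi(z)=(1-z)^{-\alpha}$ with $0<\alpha\le 1$ and reduce the statement to Theorem \ref{Thm:nonnegativephi}. In the notation \eqref{PQ},
\begin{align*}
P(z)=\frac{\varphi'(z)}{\varphi(z)}=\frac{\alpha}{1-z}=\alpha\sum_{n=0}^\infty z^n,\qquad
Q(z)=(1-z)^{-\alpha}-1=\sum_{n=1}^\infty\frac{(\alpha)_n}{n!}z^n .
\end{align*}
Here $(\alpha)_n$ is the Pochhammer symbol, so every coefficient is positive for $\alpha>0$. Strictly, $\varphi\in\mathcal M$ needs $\varphi$ to be Carathéodory with image starlike with respect to $1$. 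For $0<\alpha\le1$ the image lies in the sector $|\arg w|<\alpha\pi/2$, which gives the positive real part, and the starlikeness we take as in \cite{Allu2026}. In any case the proof of Theorem \ref{thm:main} and the reduction to Theorem \ref{Thm:nonnegativephi} only use the subordination representation and the coefficient signs.

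Theorem \ref{Thm:nonnegativephi} then gives
\[
\|P_f\|\le\sup_{0<t<1}h(t),\qquad h(t):=(1-t^2)\left(\frac{\alpha}{1-t}+\frac{(1-t)^{-\alpha}-1}{t}\right).
\]

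\textbf{Case $\alpha=1$.} Here $h(t)=(1+t)+(1+t)=2(1+t)$. This is increasing on $(0,1)$, so the supremum is $4$, attained as $t\to1^-$.

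\textbf{Case $0<\alpha<1$.} Write $h(t)=\alpha(1+t)+g(t)$ with $g(t)=(1-t^2)\big((1-t)^{-\alpha}-1\big)/t$. Differentiating and clearing the positive factors $t^2$ and $(1-t)^{\alpha}$, the sign of $h'(t)$ is the sign of
\[
N(t)=\alpha t\bigl(1+t+(1-t)^\alpha t\bigr)+\bigl((1-t)^\alpha-1\bigr)(1+t^2).
\]
This should match the displayed equation after multiplying through by $(1-t)^{-\alpha}$; we will check this bookkeeping first.

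The endpoint behaviour is as follows. Near $0$, expand $(1-t)^\alpha=1-\alpha t+\tfrac{\alpha(\alpha-1)}{2}t^2+O(t^3)$. The linear terms cancel, and the quadratic part is $\alpha t^2+\tfrac{\alpha(\alpha-1)}2t^2+\alpha t^2=\tfrac{\alpha(\alpha+3)}{2}t^2>0$. So $N>0$ near $0$ and $h$ is increasing there. At $t=1$ we get $N(1)=2\alpha-2<0$. Hence $N$ has a zero $t_1\in(0,1)$.

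\textbf{Uniqueness of $t_1$ (main obstacle).} We must show $N$ has exactly one zero in $(0,1)$. Our first approach is the paper's standard device: show $N/t^2$ is strictly decreasing, or that $h$ is concave.

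An alternative is to substitute $u=1-t$ and $x=u^\alpha$, and show that $N(t)/t$, viewed as a function of $t$, changes sign only once. Its derivative combines terms like $\alpha(1-\alpha)t(1-t)^{\alpha-1}$. Dividing by $(1-t)^{\alpha-1}$ leaves polynomial-plus-power expressions, and we would show these have a definite sign using the Bernoulli inequalities $(1-t)^\alpha\le1-\alpha t$ and $(1-t)^\alpha\ge 1-t$.

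If neither works directly, we fall back on the following. Write $N(t)=0$ as $(1-t)^\alpha=\dfrac{1+t^2-\alpha t(1+t)}{1+t^2+\alpha t^2}$. The left side is log-concave in $t$. We would show the right side minus the left side is monotone, or compare logarithmic derivatives so the two graphs cross only once.

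Once uniqueness holds, $h$ increases on $(0,t_1)$ and decreases on $(t_1,1)$. Therefore $\|P_f\|\le h(t_1)$, which is the stated bound.
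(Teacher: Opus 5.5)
Your reduction to Theorem \ref{Thm:nonnegativephi}, the case $\alpha=1$, the formula $h'(t)=N(t)/\bigl(t^2(1-t)^\alpha\bigr)$, and the endpoint behaviour $h'(0^+)=\alpha(3+\alpha)/2>0$, $N(1)=2\alpha-2<0$ are all correct and match the paper. The gap is the step you flag yourself: you never prove that $t_1$ is unique. You list three strategies but carry out none of them: monotonicity of $N/t^2$ or concavity of $h$, a sign-change argument via Bernoulli inequalities, and a log-concavity crossing argument. You even hedge with ``if neither works''. The statement defines $t_1$ as the unique root, and it identifies $\sup_{0<t<1}h$ with $h(t_1)$. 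Uniqueness is therefore the only non-routine content of the corollary for $0<\alpha<1$. Without it, the proposal establishes only that some critical point exists, not the stated bound.

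The concavity route you list first is exactly what the paper uses, and it closes with a short computation. Set $p(t)=2-2(1+\alpha)t+\alpha(1+\alpha)t^2+\alpha(\alpha-1)t^3$. Then
\[
t^3h''(t)=k_1(t):=\frac{p(t)}{(1-t)^{1+\alpha}}-2,\qquad k_1(0)=0,
\]
\[
k_1'(t)=\frac{(1-t)p'(t)+(1+\alpha)p(t)}{(1-t)^{2+\alpha}}=\frac{\alpha(\alpha-1)t^2\bigl(4+\alpha-(2-\alpha)t\bigr)}{(1-t)^{2+\alpha}}.
\]
The constant and linear terms of $(1-t)p'+(1+\alpha)p$ cancel. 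Since $4+\alpha-(2-\alpha)t>2+2\alpha>0$ and $\alpha(\alpha-1)<0$, we get $k_1'<0$ on $(0,1)$. Hence $k_1<0$, and so $h''<0$ on $(0,1)$.

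It follows that $h'$ decreases strictly from $\alpha(3+\alpha)/2$ to $-\infty$. So $h'$ has exactly one zero $t_1$, $h$ increases on $(0,t_1)$ and decreases on $(t_1,1)$, and $\sup h=h(t_1)$. Note that what is needed is monotonicity of $h'$ itself, since $N$ and $h'$ have the same sign; monotonicity of $N/t^2$ is not required. With this computation inserted, your argument is complete and coincides with the paper's, and the other two strategies are unnecessary.
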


\begin{proof}
	Let \(\varphi_\alpha(z)=(1-z)^{-\alpha}\) for \(0<\alpha\leq 1\). Then we have
	\begin{align*}
		P_\alpha(z)&=\frac{\varphi'_\alpha(z)}{\varphi_\alpha(z)}=\alpha\sum_{n=0}^\infty z^n,\\
		Q_\alpha(z)&=\varphi_\alpha(z)-1=\sum_{n=1}^{\infty} \binom{\alpha+n-1}{n} z^n.
	\end{align*}
	The functions \(P_\alpha(z)\) and \(Q_\alpha(z)\) have non-negative Maclaurin coefficients for \(0<\alpha\leq 1\).
	Thus, by Theorem \ref{Thm:nonnegativephi}, we get
	\begin{align*}
		\|P_f\| &\leq \sup_{0 < t < 1} (1-t^2)\left(\frac{\varphi_\alpha'(t)}{\varphi_\alpha(t)}+\frac{\varphi_\alpha(t)-1}{t} \right) = \sup_{0 < t < 1} (1-t^2)\left(\frac{\alpha}{1-t}+\frac{-1+(1-t)^{-\alpha}}{t} \right).
	\end{align*}
	When \(\alpha=1\), we obtain
	\[
	\|P_f\|\leq 2(1+t)\leq 4.
	\]
	For the case \(0<\alpha<1\). We define
	\begin{align*}
		h_{\alpha}(t)=(1-t^2)\left(\frac{\alpha}{1-t}+\frac{-1+(1-t)^{-\alpha}}{t} \right).
	\end{align*}
	Taking the derivative of \(h_{\alpha}(t)\) with respect to \(t\), we obtain
	\begin{align*}
		h_{\alpha}'(t) &=\frac{\alpha t\bigl(1+t+(1-t)^\alpha t\bigr)+\bigl(-1+(1-t)^\alpha\bigr)(1+t^2)}{t^2(1-t)^{\alpha}}, \\
		t^3h_{\alpha}''(t)&= -\frac{2\bigl(-1 + (1-t)^\alpha\bigr) + (2 + 2\alpha - 2(1-t)^\alpha)t - \alpha(1+\alpha)t^2 - (\alpha - 1)\alpha t^3}{(1-t)^{\alpha+1} } =: k_1(t).
	\end{align*}
	For \(0<\alpha<1\) and \(0<t<1\), we have
	\begin{align*}
		k'_1(t) &=\frac{\alpha (\alpha-1) t^2\bigl(4+\alpha+\alpha t-2 t\bigr)}{(1-t)^{2+\alpha}} < 0.
	\end{align*}
	That is, \(k_1(t)\) is strictly decreasing on \(t \in (0,1)\). It follows that \(h_1(t) \leq h_1(0) = 0\), hence \(h_{\alpha}''(t) \leq 0\) for \(0 < t < 1\). Consequently, \(h_{\alpha}'(t)\) is non-increasing on \((0,1)\). Computing 
	\[
	\lim_{t \to 0^+} h_{\alpha}'(t) = \frac{\alpha(3+\alpha)}{2} > 0, \qquad
	\lim_{t \to 1^-} h_{\alpha}'(t) = -\infty,
	\]
	we conclude that \(h_{\alpha}'(t)\) has a unique root \(t_1 \in (0,1)\), and \(h_{\alpha}(t)\) attains its maximum at \(t_1\). Therefore,
	\[
	\|P_f\| \leq (1 - t_1^2) \left( \frac{\alpha}{1 - t_1} + \frac{-1 + (1 - t_1)^{-\alpha}}{t_1} \right).
	\]
\end{proof}

\section*{Statements and Declarations:}
	\vskip.05in
	\noindent{\bf Funding: }
	The present investigation was supported by the \textit{Natural Science Foundation of Hunan Province} under Grant no. 2026JJ60325.
	\vskip.05in
	\noindent{\bf Financial interests:} 
 The authors declare they have no financial interests.
 	\vskip.05in
	\noindent{\bf Author contributions:} All authors contributed to the study conception and design. Material preparation, data collection and analysis were performed by Ming Li and Mei Luo. The first draft of the manuscript was written by Mei Luo and all authors commented on previous versions of the manuscript. All authors read and approved the final manuscript.
\bibliography{reference}
\end{document}